\DeclareSymbolFontAlphabet{\mathbb}{AMSb}
\DeclareSymbolFontAlphabet{\mathbbol}{bbold}
\theoremstyle{plain}
\newtheorem{theorem}{\normalfont\scshape Theorem}[section]
\newtheorem{proposition}{\normalfont\scshape Proposition}[section]
\newtheorem{lemma}[proposition]{\normalfont\scshape Lemma}
\newtheorem*{corollary*}{\normalfont\scshape Corollary}
\newtheorem{remark}{\normalfont\scshape Remark}
\theoremstyle{remark}
\newtheorem*{remark*}{\normalfont\scshape Remark}
\newtheorem*{notation}{\normalfont\scshape Notation}
\numberwithin{equation}{section}
\renewcommand{\footnoterule}{
  \kern -3pt
  \hrule width 2.5in height 0.4pt
  \kern 3pt
}
\begin{document}
	
\title[Asymmetric two-dimensional divisor problem with congruence conditions ]
	  {On the mean square of the error term for the asymmetric two--dimensional divisor problem with
       congruence conditions}

\author[Zhen Guo, Jinjiang Li, Linji Long, Min Zhang]
       {Zhen Guo \quad \& \quad Jinjiang Li \quad \& \quad Linji Long \quad \& \quad Min Zhang}

\address{[Zhen Guo] Zhiyuan School of Liberal Arts, Beijing Institute of Petro--Chemical Technology,
          Beijing 102617, People's Republic of China}

\email{zhen.guo.math@gmail.com}

\address{[Jinjiang Li] (Corresponding author) Department of Mathematics, China University of Mining and Technology,
          Beijing 100083, People's Republic of China}

\email{jinjiang.li.math@gmail.com}

\address{[Linji Long] Department of Mathematics, China University of Mining and Technology,
          Beijing 100083, People's Republic of China}

\email{linji.long.math@gmail.com}

\address{[Min Zhang] School of Applied Science, Beijing Information Science and Technology University,
          Beijing 100192, People's Republic of China}

\email{min.zhang.math@gmail.com}

\date{}

\footnotetext[1]{Jinjiang Li is the corresponding author. \\
\quad\,\,
{\textbf{Keywords}}: Two--dimensional divisor problem; Mean square; asymptotic formula, \\

\quad\,\,
{\textbf{MR(2020) Subject Classification}}: 11N37, 11N60

}

\begin{abstract}
Suppose that $a$ and $b$ are positive integers subject to $(a,b)=1$. For $n\in\mathbb{Z}^+$, denote by $\tau_{a,b}(n;\ell_1,M_1,l_2,M_2)$ the asymmetric two--dimensional divisor function with congruence conditions, i.e.,
\begin{equation*}
\tau_{a,b}(n;\ell_1,M_1,l_2,M_2)=\sum_{\substack{n=n_1^an_2^b\\ n_1\equiv\ell_1\!\!\!\!\!\pmod{M_1}\\
n_2\equiv\ell_2\!\!\!\!\!\pmod{M_2}}}1.
\end{equation*}
In this paper, we shall establish an asymptotic formula of the mean square of the error
term of the sum $\sum_{n\leqslant M_1^aM_2^bx}\tau_{a,b}(n;\ell_1,M_1,l_2,M_2)$. This result constitutes
an enhancement upon the previous result of Zhai and Cao \cite{MR2564393}.
\end{abstract}

\maketitle

\section{\texorpdfstring{Introduction}{}}
For a positive integer $n$, denote by $\tau(n)$ the classical Dirichlet divisor function. For fixed integers $M_1$
and $M_2$, the divisor function with congruence conditions is defined by
\begin{equation*}
  \tau(n;\ell_1,M_1,l_2,M_2):=\#\{(n_1,n_2)\in\mathbb{N}^2:\;{n_1}{n_2}=n,\;n_i\equiv \ell_i\!\!\!\!\pmod{M_i},\,i=1,2\},
\end{equation*}
where $1\leqslant \ell_1\leqslant M_1$ and $1\leqslant \ell_2\leqslant M_2$. Its generating function is
\begin{equation*}
  \sum_{n=1}^{\infty}\frac{\tau(n;\ell_1,M_1,\ell_2,M_2)}{n^s}=\zeta(s,\lambda_1)\zeta(s,\lambda_2)(M_1M_2)^{-s},
\end{equation*}
where $\Re s>1$, $\lambda_i=\frac{\ell_i}{M_i}$, $i=1,2$, and $\zeta(s,\lambda)$ denotes the Hurwitz zeta--function, which is defined by
\begin{equation*}
\zeta(s,\alpha):=\sum_{n=0}^\infty\frac{1}{(n+\alpha)^s},\qquad s=\sigma+it,\qquad \sigma>1,
\end{equation*}
for $0<\alpha\leqslant1$. The \textit{divisor problem with congruence conditions} is to investigate the error term
\begin{equation*}
\Delta(x;\ell_1,M_1,\ell_2,M_2):=\sum_{n\leqslant x}\tau(n;\ell_1,M_1,\ell_2,M_2)
-\sum_{s_0=0,1}\mathop{\text{Res}}_{s=s_0}\bigg(\frac{\zeta(s,\lambda_1)\zeta(s,\lambda_2)}{s}
\bigg(\frac{x}{M_1M_2}\bigg)^s\bigg).
\end{equation*}
It is conjectured that, for any $\varepsilon>0$, the following estimate
\begin{equation*}
\Delta(M_1M_2x;\ell_1,M_1,\ell_2,M_2)\ll x^{1/4+\varepsilon}
\end{equation*}
holds uniformly in $1\leqslant \ell_1\leqslant M_1$ and $1\leqslant \ell_2\leqslant M_2$. The hitherto best result in this direction is
\begin{equation*}
  \Delta(M_1M_2x;\ell_1,M_1,\ell_2,M_2)\ll x^{\frac{131}{416}}(\log x)^{\frac{26947}{8320}},
\end{equation*}
uniformly in $1\leqslant \ell_1\leqslant M_1$ and $1\leqslant \ell_2\leqslant M_2$, which follows from Richert \cite{MR54655} and Huxley \cite{MR2005876}, consecutively and respectively.

In 1995, M\"{u}ller and Nowak \cite{MR1344601} investigated the mean value of $\Delta(M_1M_2x;\ell_1,M_1,\ell_2,M_2)$. They proved that, for a large real parameter $T$, there holds
\begin{equation*}
  \int_{1}^{T}\Delta(M_1M_2x;\ell_1,M_1,\ell_2,M_2)\mathrm{d}x\ll x^{3/4},
\end{equation*}
and, by using Landau's \cite{MR1544390} method, there holds
\begin{equation}\label{landau}
  \int_{1}^{T}\Delta^2(M_1M_2x;\ell_1,M_1,\ell_2,M_2)dx=\mathfrak{C}_2\left(\frac{\ell_1}{M_1},\frac{\ell_2}{M_2}\right)T^{3/2}+o(T^{3/2})
\end{equation}
(even a good error term), uniformly in $1\leqslant \ell_1\leqslant M_1$ and $1\leqslant \ell_2\leqslant M_2$, where $\mathfrak{C}_2(\ell_1/M_1,\ell_2/M_2)$ is a computable constant determined by $\ell_1/M_1$ and $\ell_2/M_2$.

Suppose $a$ and $b$ are two fixed integers which satisfy $1\leqslant a\leqslant b$. Without loss of generality, we postulate that $(a,b)=1$, and define $\tau_{a,b}(n):=\sum_{n=n_1^an_2^b}1$. The \textit{asymmetric two--dimensional divisor problem} is to study the error term
\begin{equation*}
  \Delta_{a,b}(x):=\sum_{n\leqslant x}\tau_{a,b}(n)-\zeta\left(b/a\right)x^{1/a}-\zeta\left(a/b\right)x^{1/b},
\end{equation*}
under the condition $a\neq b$. If $a=b$, then the appropriate limit should be taken in the above sum. This problem also attracts plenty of interests of many authors.
When $a=b=1$, $\Delta_{1,1}(x)$ is the error term of the classical Dirichlet divisor problem, whose upper bound estimates, $\Omega$--results, sign changes and mean values were investigated by many mathematicians during the past centuries. In this paper, we mainly focus on the case $a\neq b$, so we omit the details, herein, of the results about $\Delta_{1,1}(x)$.

When $a\neq b$, Richert \cite{MR50577} proved that
\begin{equation*}
  \Delta_{a,b}(x)\ll
  \begin{cases}
    x^{\frac{2}{3(a+b)}}, &  b\leqslant2a, \\
    x^{\frac{2}{5a+2b}}, &  b\geqslant2a.
  \end{cases}
\end{equation*}
Sharper upper bound estimates can be found in \cite{MR242776,MR998378,MR72170,MR181609}. In 1988, Hafner \cite{MR932373} proved that
\begin{equation}\label{Hafner}
  \Delta_{a,b}(x)=\Omega_{+}\big(x^{1/2(a+b)}(\log x)^{b/2(a+b)}\log\log x\big)
\end{equation}
and
\begin{equation*}
  \Delta_{a,b}(x)=\Omega\big(x^{1/2(a+b)e^{U(x)}}\big),
\end{equation*}
where
\begin{equation*}
  U(x)=B(\log\log x)^{b/2(a+b)}(\log\log\log x)^{b/2(a+b)-1}
\end{equation*}
for some positive constant $B>0$. However, Nowak \cite{MR1359135} pointed out that the factor $(\log x)^{b/2(a+b)}$ in (\ref{Hafner}) is not correct. K\"{u}hleitner \cite{MR1462841} proved that
\begin{equation*}
  \Delta_{a,b}(x)=\Omega_{+}(x^{1/2(a+b)}(\log x)^{a/2(a+b)}(\log\log x)^{1+(2\log2-1)a/2(a+b)}\text{exp}(-c\sqrt{\log\log\log x})),
\end{equation*}
which is the sharpest result of this kind up to date.

It is conjectured that the estimate
\begin{equation*}
  \Delta_{a,b}(x)=O(x^{1/2(a+b)+\varepsilon})
\end{equation*}
holds for any $1\leqslant a\leqslant b$ with $(a,b)=1$. When $a\neq b$, by using the theory of the Riemann zeta--fuction, Ivi\'{c} \cite{MR0904009}  proved that
\begin{equation*}
  \int_{1}^{T}\Delta^2_{a,b}(x)\mathrm{d}x
  \begin{cases}
    \ll T^{1+1/(a+b)}\log^2T, \\
    =\Omega(T^{1+1/(a+b)}),
  \end{cases}
\end{equation*}
who also conjectured that the asymptotic formula
\begin{equation}\label{Ivic}
  \int_{1}^{T}\Delta^2_{a,b}(x)\mathrm{d}x=\mathfrak{c}_{a,b}T^{1+1/(a+b)}(1+o(1))
\end{equation}
holds for some positive constant $\mathfrak{c}_{a,b}$ in the same literature. In 2010, Zhai and Cao \cite{MR2564393} proved this conjecture and gave a more precise result, i.e.,
\begin{equation}\label{Zhai Cao}
  \int_{1}^{T}\Delta^2_{a,b}(x)\mathrm{d}x=\mathfrak{c}_{a,b}T^{\frac{1+a+b}{a+b}}+O(T^{\frac{1+a+b}{a+b}-\frac{a}{2b(a+b)(a+b-1)}}\log^{7/2}T),
\end{equation}
where
\begin{align*}
  \mathfrak{c}_{a,b}:= &\,\,\frac{a^{b/(a+b)}b^{a/(a+b)}}{2(a+b+1)\pi^2}\sum_{n=1}^{\infty}g_{a,b}^2(n),
\end{align*}
\begin{align*}
  g_{a,b}(n):=&\,\,\sum_{n=h^ar^b}h^{-\frac{a+2b}{2a+2b}}r^{-\frac{b+2a}{2a+2b}}.
\end{align*}
The convergence of the infinite series $\sum_{n=1}^{\infty}g_{a,b}^2(n)$ is demonstrated in Section 4 of Zhai and Cao \cite{MR2564393}.

Based on the above results, it is reasonable to consider \textit{asymmetric two--dimensional divisor function with congruence conditions}, i.e.,
\begin{equation*}
\tau_{a,b}(n;\ell_1,M_1,\ell_2,M_2)
:=\#\big\{(n_1,n_2)\in\mathbb{N}^2:\, n_1^an_2^b=n,\;n_i\equiv \ell_i\!\!\!\!\!\pmod{M_i},\,i=1,2\big\},
\end{equation*}
whose generating function is
\begin{equation*}
\sum_{n=1}^{\infty}\frac{\tau_{a,b}(n;\ell_1,M_1,\ell_2,M_2)}{n^s}
=\zeta(as,\lambda_1)\zeta(bs,\lambda_2)( M_1^aM_2^b)^{-s},
\end{equation*}
where $\Re s>1$, $\lambda_i=\frac{\ell_i}{M_i}$, $i=1,2$, and $\zeta(s,\lambda)$ denotes the Hurwitz zeta--function. The \textit{asymmetric two--dimensional divisor problem with congruence conditions} is to investigate the error term
\begin{equation*}
\begin{aligned}
  \Delta_{a,b}(x;\ell_1,M_1,\ell_2,M_2):=\,\,&\sum_{n\leqslant x}\tau_{a,b}(n;\ell_1,M_1,\ell_2,M_2)\\
  &\,\,-\sum_{s_0={0,1}}\mathop{\text{Res}}_{s=s_0}\left(\frac{\zeta(as,\lambda_1)\zeta(bs,\lambda_2)}{s}\left(\frac{x}{ M_1^aM_2^b}\right)^s\right).
\end{aligned}
\end{equation*}

In this paper, we shall study the mean square moment of $\Delta_{a,b}(x;\ell_1,M_1,\ell_2,M_2)$ and establish an asymptotic formula.
\begin{theorem}\label{Theorem}
  Suppose that $a$ and $b$  are two fixed integers which satisfy $1\leqslant a<b$ and $(a,b)=1$. For fixed integers $M_1$ and $M_2$, we have
\begin{equation*}
  \int_{1}^{T}\Delta^2_{a,b}( M_1^aM_2^bx;\ell_1,M_1,\ell_2,M_2)\mathrm{d}x=\mathfrak{c}^{*}_{a,b}T^{\frac{1+a+b}{a+b}}+O(T^{\frac{1+a+b}{a+b}-\frac{a}{2b(a+b)(a+b-1)}}\log^{7/2}T),
\end{equation*}
where
\begin{equation*}
  \mathfrak{c}^{*}_{a,b}=\frac{a^{b/(a+b)}b^{a/(a+b)}}{2(a+b+1)\pi^2}\sum_{n=1}^{\infty}{g^{*}_{a,b}}(n),
\end{equation*}
and
\begin{equation*}
  {g^{*}_{a,b}}(n)=\sum_{n=h_1^ar_1^b=h_2^ar_2^b}(h_1h_2)^{-\frac{a+2b}{2a+2b}}(r_1r_2)^{-\frac{b+2a}{2a+2b}}
  \cos\left(2\pi\left(\frac{(r_2-r_1)\ell_2}{M_2}+\frac{(h_2-h_1)\ell_1}{M_1}\right)\right).
\end{equation*}
\end{theorem}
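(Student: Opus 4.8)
The plan is to adapt the method of Zhai and Cao \cite{MR2564393}: the congruence conditions enter only through the coefficients and phases of the underlying Voronoi expansion and leave its frequencies unchanged, so the hard analytic estimates carry over essentially verbatim while the main term has to be recomputed. First I would establish a truncated Voronoi--type formula for the error term. Starting from the generating series $\zeta(as,\lambda_1)\zeta(bs,\lambda_2)(M_1^aM_2^b)^{-s}$ with $\lambda_i=\ell_i/M_i$, I apply Perron's formula and shift the contour to the left, collecting the residues that constitute the subtracted main term. To the remaining integral I apply, to each factor, the Hurwitz functional equation (writing $\mathrm{e}(t)=e^{2\pi i t}$)
\[
\zeta(s,\lambda)=\frac{\Gamma(1-s)}{(2\pi)^{1-s}}\Big(e^{\frac{\pi i(1-s)}{2}}\sum_{m=1}^{\infty}\frac{\mathrm{e}(m\lambda)}{m^{1-s}}+e^{-\frac{\pi i(1-s)}{2}}\sum_{m=1}^{\infty}\frac{\mathrm{e}(-m\lambda)}{m^{1-s}}\Big),
\]
and evaluate the resulting exponential integrals by the saddle--point method, exactly as for the classical Dirichlet divisor problem. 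Because the normalisation by $M_1^aM_2^b$ cancels the factor $(M_1^aM_2^b)^{-s}$, this yields a clean formula of the shape
\[
\Delta_{a,b}(M_1^aM_2^bx;\ell_1,M_1,\ell_2,M_2)=\frac{a^{\frac{b}{2(a+b)}}b^{\frac{a}{2(a+b)}}}{\pi\sqrt{a+b}}\sum_{h^ar^b\leqslant N}h^{-\frac{a+2b}{2(a+b)}}r^{-\frac{b+2a}{2(a+b)}}\cos\Big(2\pi(a+b)\Big(\frac{h^ar^b}{a^ab^b}\Big)^{\frac{1}{a+b}}x^{\frac{1}{a+b}}+\vartheta_{a,b}+2\pi\Big(\frac{h\ell_1}{M_1}+\frac{r\ell_2}{M_2}\Big)\Big)+O\big(R(x,N)\big),
\]
with a fixed phase $\vartheta_{a,b}$ and a suitable truncation parameter $N=N(T)$. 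The essential point is that the congruence data appear only in the bounded, $x$--independent phase $2\pi(h\ell_1/M_1+r\ell_2/M_2)$, whereas the frequencies $(h^ar^b/a^ab^b)^{1/(a+b)}$ are exactly those of \cite{MR2564393}.

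Squaring this identity and integrating over $1\leqslant x\leqslant T$, I would expand each product of cosines by the product--to--sum formula. The diagonal part consists of those pairs $(h_1,r_1),(h_2,r_2)$ whose frequencies agree, i.e. with $h_1^ar_1^b=h_2^ar_2^b$; on these the $x$--dependence drops out and the fixed phase $\vartheta_{a,b}$ cancels in the difference of the two arguments, leaving $\tfrac12\cos\big(2\pi(\frac{(h_2-h_1)\ell_1}{M_1}+\frac{(r_2-r_1)\ell_2}{M_2})\big)$. Summing over all such pairs with common value $n=h_1^ar_1^b=h_2^ar_2^b$ and multiplying by the squared amplitude reproduces exactly $g^{*}_{a,b}(n)$, while the integral $\int_1^Tx^{1/(a+b)}\,\mathrm{d}x=\frac{a+b}{a+b+1}T^{(1+a+b)/(a+b)}+O(1)$ supplies the power of $T$; a direct bookkeeping of the constants (the squared amplitude is $\frac{a^{b/(a+b)}b^{a/(a+b)}}{\pi^2(a+b)}$) then gives the stated $\mathfrak c^{*}_{a,b}$. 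The series $\sum_n g^{*}_{a,b}(n)$ converges because $|g^{*}_{a,b}(n)|\leqslant g_{a,b}^2(n)$ and $\sum_n g_{a,b}^2(n)<\infty$ by \cite{MR2564393}.

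The main obstacle is the off--diagonal estimate. For pairs with $h_1^ar_1^b\neq h_2^ar_2^b$ the product--to--sum formula produces cosines with a nonzero $x$--frequency, and after integration each contributes an amount governed by the reciprocal of the gap $\big|(h_1^ar_1^b)^{1/(a+b)}-(h_2^ar_2^b)^{1/(a+b)}\big|$; controlling the accumulation of these terms requires the delicate spacing estimates for the values $h^ar^b$, the treatment of the tail beyond the truncation $N$, and of the pairs whose frequencies are merely close, all of which together pin down the exponent $\frac{1+a+b}{a+b}-\frac{a}{2b(a+b)(a+b-1)}$ and the factor $\log^{7/2}T$. The decisive simplification is that the additive phases $2\pi(h\ell_1/M_1+r\ell_2/M_2)$ and the fixed phase $\vartheta_{a,b}$ are bounded and independent of $x$, so they leave every frequency, gap, and stationary--phase bound untouched: the off--diagonal analysis of Zhai and Cao \cite{MR2564393} transfers verbatim and yields the same error term. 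Consequently the whole analytic difficulty coincides with the congruence--free problem already solved in \cite{MR2564393}, and only the diagonal computation of the preceding paragraph genuinely requires the congruence conditions, through the cosine factor in $g^{*}_{a,b}(n)$.
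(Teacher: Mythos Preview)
Your overall strategy is sound and the key insight---that the congruence data enter only as bounded, $x$--independent phases so that the frequency set $\{(h^ar^b)^{1/(a+b)}\}$ is unchanged and the hard spacing lemma of Zhai--Cao applies verbatim---is exactly the heart of the matter. The diagonal computation you describe reproduces $g^{*}_{a,b}(n)$ and $\mathfrak{c}^{*}_{a,b}$ correctly, and your observation $|g^{*}_{a,b}(n)|\leqslant g_{a,b}^2(n)$ is precisely how the paper justifies convergence.

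Where you differ from the paper is in the derivation of the truncated Vorono\"{\i} formula. You propose the complex--analytic route: Perron's formula, contour shift, Hurwitz functional equation, and stationary phase on the resulting integrals. The paper instead follows Zhai--Cao's elementary route: it writes $\Delta_{a,b}(M_1^aM_2^bx;\ell_1,M_1,\ell_2,M_2)$ via the Dirichlet hyperbola method as a sum of $\psi$--values (equations (\ref{Delta ab chaifen})--(\ref{def F21})), replaces each $\psi$ by its truncated Fourier expansion (Lemma~\ref{Heath Brown}), and then applies van der Corput's $B$--process (Lemma~\ref{Vinogradov}) to the resulting exponential sums. Both routes produce the same main--term expansion with phases $2\pi(h\ell_1/M_1+r\ell_2/M_2)$ attached; the advantage of the paper's route is structural: because the decomposition mirrors Zhai--Cao term by term, the auxiliary error pieces (the tails $R^{*}_{12,2},R^{*}_{21,2}$ and the $\psi$--truncation remainders $G_{12},G_{21}$) are exactly those in \cite{MR2564393} up to harmless phase factors, so their mean--square bounds and Lemma~\ref{Sab T est} can be quoted directly. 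In your approach the analogous object is the unspecified $R(x,N)$; making the mean square of that remainder small enough to feed into the Cauchy--Schwarz step (which is what produces the halved saving $\frac{a}{2b(a+b)(a+b-1)}$) is where the real work in your outline would lie, and you should be explicit that this is handled by the same tail and close--pair analysis rather than leaving it inside a generic $O(R(x,N))$.
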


\begin{remark}\label{conv of g*}
  It is easy to see that the function ${g^{*}_{a,b}}(n)$ is symmetric for $a$ and $\,b$, i.e., ${g^{*}_{a,b}}(n)={g^{*}_{b,a}}(n)$. The convergence of the infinite series $\sum_{n=1}^{\infty}{g^{*}_{a,b}}(n)$ can be obtained by the convergence of $\sum_{n=1}^{\infty}g_{a,b}^2(n)$, since $|{g^{*}_{a,b}}(n)|\leqslant g_{a,b}^2(n)$.
\end{remark}
\begin{remark}
  Theorem \ref{Theorem} also holds for $a=b=1$. In this case, we have
\begin{equation*}
  \mathfrak{c}^{*}_{1,1}=\sum_{n=1}^{\infty}\frac{1}{n^{3/2}}\sum_{n=h_1^ar_1^b=h_2^ar_2^b}\cos\left(2\pi\left(\frac{(r_2-r_1)\ell_2}{M_2}+\frac{(h_2-h_1)\ell_1
  }{M_1}\right)\right)
  =\mathfrak{C}_2\left(\frac{\ell_1
  }{M_1},\frac{\ell_2}{M_2}\right).
\end{equation*}
Hence our theorem provides a new approach of the proof of (\ref{landau}).
\end{remark}

\begin{notation}
Throughout this paper, $\varepsilon$ denotes a sufficiently small positive number, not necessarily the same at each occurrence. $\mathbb{Z}$ denotes the set of all integers. We use $[t],\{t\}$ and $\|t\|$ to denote the integral part of $t$, the fractional part of $t$ and the distance from $t$ to the nearest integer, respectively. We write $\psi(t)=t-[t]-1/2,e(x)=e^{2\pi ix}$. As usual, denote by $\mu(n)$  the M\"{o}bious' function. $(m,n)$ denotes the greatest common divisor of natural numbers $m$ and $n$. The notation $n\sim N$ means $N<n\leqslant2N$. Finally, we define
\begin{equation*}
  \sideset{}{'}\sum_{\alpha\leqslant n\leqslant\beta}f(n)=
\begin{cases}
  \sum_{\alpha<n<\beta}f(n),& \alpha\notin\mathbb{Z},\beta\notin\mathbb{Z},\\
  f(\alpha)/2+\sum_{\alpha<n<\beta}f(n),& \alpha\in\mathbb{Z},\beta\notin\mathbb{Z},\\
  f(\alpha)/2+\sum_{\alpha<n<\beta}f(n)+f(\beta)/2,& \alpha\in\mathbb{Z},\beta\in\mathbb{Z}.
\end{cases}
\end{equation*}
\end{notation}

\section{Priliminary lemmas}
In this section, we shall list some lemmas which are necessary for establishing Theorem \ref{Theorem}.
\begin{lemma}\label{Heath Brown}
  Let $H\geqslant2$ be any real number. Then
\begin{equation*}
  \psi(u)=-\sum_{1\leqslant|h|\leqslant H}\frac{e(hu)}{2\pi ih}+O\left(\min\left(1,\frac{1}{H\|u\|}\right)\right).
\end{equation*}
\end{lemma}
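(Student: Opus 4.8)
The plan is to reduce the symmetric exponential sum to a one-sided sine series, recognise the completed series as the classical Fourier expansion of the sawtooth function $\psi$, and then control the truncation error by Abel summation against an elementary geometric-sum bound. First I would pair the terms indexed by $h$ and $-h$ in the truncated sum. Since $e(hu)-e(-hu)=2i\sin(2\pi hu)$, the contributions of $\pm h$ combine to give
\[
  -\sum_{1\leqslant|h|\leqslant H}\frac{e(hu)}{2\pi ih}
  =-\frac{1}{\pi}\sum_{h=1}^{H}\frac{\sin(2\pi hu)}{h}=:S_H(u).
\]
The classical Fourier expansion of the sawtooth asserts that $\psi(u)=-\frac{1}{\pi}\sum_{h=1}^{\infty}\frac{\sin(2\pi hu)}{h}$ for $u\notin\mathbb{Z}$, so the claim reduces to estimating the tail $R_H(u):=-\frac{1}{\pi}\sum_{h>H}\frac{\sin(2\pi hu)}{h}=\psi(u)-S_H(u)$. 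At integer points both the series and the truncation vanish while $\psi(u)=-\tfrac12$; but there $\|u\|=0$ forces $\min(1,1/(H\|u\|))=1$, so the $O(1)$ term absorbs this discrepancy, and it suffices to handle $u\notin\mathbb{Z}$.

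I would then prove two bounds on $R_H(u)$ and take the better one. For the decay estimate, the geometric sum gives, for any range $M\leqslant h\leqslant N$,
\[
  \Bigl|\sum_{M\leqslant h\leqslant N}e(hu)\Bigr|\leqslant\frac{2}{|e(u)-1|}=\frac{1}{|\sin\pi u|}\leqslant\frac{1}{2\|u\|},
\]
using $|\sin\pi u|\geqslant2\|u\|$. Taking imaginary parts bounds every partial sum of $\sum\sin(2\pi hu)$ by $(2\|u\|)^{-1}$, and Abel summation against the decreasing weights $1/h$ yields $|R_H(u)|\leqslant(2\pi H\|u\|)^{-1}$, i.e. the $O(1/(H\|u\|))$ bound. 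For the uniform estimate I would use $|\psi(u)|\leqslant\tfrac12$ together with the fact that $|S_H(u)|$ is bounded independently of $H$ and $u$: splitting the sine sum at $h\approx\|u\|^{-1}$, the low range is controlled by $|\sin(2\pi hu)|\leqslant2\pi h\|u\|$ (contributing $O(1)$ over the $\lesssim\|u\|^{-1}$ terms) and the high range again by Abel summation against the partial-sum bound above. This gives $|\psi(u)-S_H(u)|=O(1)$, and combining the two estimates produces the stated $O(\min(1,1/(H\|u\|)))$.

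The main obstacle is the behaviour near integers, where $\|u\|$ is small: there the decay bound is useless and one must fall back on the uniform $O(1)$ estimate, so the crux is the correct splitting point $h\approx\|u\|^{-1}$ and the uniform boundedness of the truncated conjugate Dirichlet kernel. The single quantitative input driving both regimes is the inequality $|\sin\pi u|\geqslant2\|u\|$, which converts the geometric-sum bound into a clean dependence on $\|u\|$; with that in hand the remaining steps are routine partial summation.
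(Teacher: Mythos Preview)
Your argument is correct and is the standard elementary proof of this classical truncation lemma for the sawtooth function. The paper itself gives no proof at all: it simply cites page~245 of Heath--Brown's paper on the Piatetski--Shapiro prime number theorem, so there is nothing substantive to compare against. Your reduction to the one-sided sine series, the Abel-summation tail bound via $|\sin\pi u|\geqslant 2\|u\|$, and the uniform $O(1)$ bound obtained by splitting at $h\approx\|u\|^{-1}$ together reproduce exactly the result being quoted.

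One minor remark: since $H$ is only assumed real, the truncation is really at $\lfloor H\rfloor$, and the tail starts at $h=\lfloor H\rfloor+1>H$, so your bound $1/(2\pi H\|u\|)$ goes through without change.
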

\begin{proof}
See the arguments on page 245 of Heath--Brown \cite{MR698168}.
\end{proof}

\begin{lemma}\label{Vinogradov}
  Suppose that $f(x)$ and $\varphi(x)$ are algebraic functions, which satisfy the following conditions constrained on the interval $[a,b]$:
\begin{equation*}
\begin{aligned}
  &|f''(x)|\asymp R^{-1},\hspace{1cm}|f'''(x)|\ll(RU)^{-1},\hspace{1cm}U\geqslant1,\\
  &|\varphi(x)|\ll H,\hspace{1cm}|\varphi'(x)|\ll HU_1^{-1},\hspace{1cm}U_1\geqslant1.
\end{aligned}
\end{equation*}
Then we have
\begin{align*}
  \sum_{a<n\leqslant b}\varphi(x)e(f(n))=&\,\,\sum_{\alpha<\nu\leqslant\beta}b_{\nu}\frac{\varphi(n_{\nu})}{\sqrt{|f''(n_{\nu})|}}e\big(f(n_{\nu})-\nu n_{\nu}+1/8\big)\\
  &\,\,+O\big(H\log(\beta-\alpha+2)+H(b-a+R)(U^{-1}+U_1^{-1})\big)\\
  &\,\,+O(H\min(\sqrt{R},\max(\langle\alpha\rangle^{-1},\langle\beta\rangle^{-1}))),
\end{align*}
where $[\alpha,\beta]$ is the image of $[a,b]$ under the mapping $y=f'(x)$; $n_{\nu}$ is the solution of the equation $f'(x)=\nu$;
\begin{equation*}
  b_{\nu}=
\begin{cases}
  1/2, & \mbox{if $\nu=\alpha\in\mathbb{Z}$ or $\nu=\beta\in\mathbb{Z}$},\\
  \,\,\,1, & \mbox{if $\alpha<\nu<\beta$};
\end{cases}
\end{equation*}
and the function $\langle\cdot\rangle$ is defined by
\begin{equation*}
  \langle t\rangle=
\begin{cases}
  \,\,\,\|t\|, & \mbox{if $t\notin\mathbb{Z}$}, \\
  \beta-\alpha, & \mbox{otherwise},
\end{cases}
\end{equation*}
where $\|t\|=\min_{m\in\mathbb{Z}}|t-m|$.
\end{lemma}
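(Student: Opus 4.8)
The plan is to establish this as an instance of the van der Corput $B$-process, obtained by combining the Poisson summation formula with the method of stationary phase (the saddle-point method). Throughout write $F(x):=f(x)-\nu x$; since $|f''(x)|\asymp R^{-1}$ forces $f''$ to have constant sign on $[a,b]$, the map $y=f'(x)$ is strictly monotone, so $[\alpha,\beta]=f'([a,b])$ is a genuine interval and the equation $f'(x)=\nu$ has a unique solution $n_\nu\in[a,b]$ precisely when $\nu\in[\alpha,\beta]$.

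First I would pass from the sum to a family of integrals. Applying the truncated Poisson summation formula (equivalently, inserting the Fourier expansion of $\psi$ furnished by Lemma \ref{Heath Brown} and integrating by parts) gives
\[
\sum_{a<n\leq b}\varphi(n)e(f(n))=\sum_{\nu}\int_a^b\varphi(x)e\big(f(x)-\nu x\big)\,dx+\text{(truncation and endpoint errors)}.
\]
For $\nu$ outside a bounded neighbourhood of $[\alpha,\beta]$ the phase $F$ has no stationary point and $|F'(x)|=|f'(x)-\nu|$ is bounded away from $0$; repeated integration by parts then shows the corresponding integrals are negligible, and summing these tails is absorbed into the error. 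This confines the outer sum to $\alpha<\nu\leq\beta$, with the half-weights $b_\nu$ appearing at integer endpoints.

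The core step is the saddle-point evaluation of $I_\nu:=\int_a^b\varphi(x)e(F(x))\,dx$ for $\nu\in[\alpha,\beta]$. I would split the range of integration into a neighbourhood $|x-n_\nu|\ll\sqrt{R}$ of the stationary point and its complement. On the complement, integration by parts (using the lower bound for $|F'|$) gives an admissible contribution. On the neighbourhood I would Taylor-expand
\[
F(x)=F(n_\nu)+\tfrac{1}{2} f''(n_\nu)(x-n_\nu)^2+O\big(|f'''|\,|x-n_\nu|^3\big)
\]
and replace $\varphi(x)$ by $\varphi(n_\nu)$; the Gaussian integral $\int e\big(\tfrac{1}{2} f''(n_\nu)t^2\big)\,dt$ then evaluates to $|f''(n_\nu)|^{-1/2}e(\pm1/8)$, the sign of $1/8$ matching that of $f''$, and produces the main term $b_\nu\varphi(n_\nu)|f''(n_\nu)|^{-1/2}e\big(f(n_\nu)-\nu n_\nu+1/8\big)$. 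This is exactly where the hypotheses enter quantitatively: the cubic remainder, controlled by $|f'''|\ll(RU)^{-1}$, contributes the loss $U^{-1}$; the variation of the amplitude, controlled by $|\varphi'|\ll HU_1^{-1}$, contributes $U_1^{-1}$; the factor $H(b-a+R)$ records the amplitude size times the relevant length scale; and $H\log(\beta-\alpha+2)$ arises on summing the per-$\nu$ error $O(H/(\text{distance from }n_\nu\text{ to the endpoints}))$ over the $\asymp\beta-\alpha$ admissible integers $\nu$.

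The main obstacle, and the part demanding the greatest care, is making the stationary-phase estimate uniform and fully explicit---above all the treatment of those $\nu$ for which $n_\nu$ lies close to an endpoint $a$ or $b$, where the Gaussian is truncated and the saddle point merges with the boundary. This regime is the origin of the delicate final error term $O\big(H\min(\sqrt{R},\max(\langle\alpha\rangle^{-1},\langle\beta\rangle^{-1}))\big)$, in which the quantity $\langle\cdot\rangle$ measures the distance from an endpoint value to the nearest integer, and it must be balanced against the $\sqrt{R}$ scale of the saddle while keeping every implied constant independent of $H,R,U,U_1$ and of the interval $[a,b]$.
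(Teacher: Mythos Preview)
Your proposal sketches the standard proof of this lemma---Poisson summation followed by stationary-phase evaluation of the resulting integrals, with careful handling of the endpoint contributions---and the outline is correct in its broad strokes. The paper, however, does not prove this lemma at all: its entire proof reads ``See Theorem~1 of Chapter~III of Karatsuba and Voronin \cite{Karatsuba-Voronin-1992}.'' So there is no comparison of approaches to make; you have supplied a genuine proof sketch where the paper simply invokes a reference.

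That said, what you have written is a high-level roadmap rather than a proof. If you intend to actually carry it out, the nontrivial work is exactly where you flag it: obtaining the uniform saddle-point expansion with the precise error terms stated (particularly the $H\min(\sqrt{R},\max(\langle\alpha\rangle^{-1},\langle\beta\rangle^{-1}))$ term coming from saddle points near the endpoints), and controlling the tail in $\nu$. The full argument in Karatsuba--Voronin runs to several pages, so for a paper that merely \emph{uses} this lemma, a citation is the appropriate choice; your sketch would be suitable as motivation but not as a replacement.
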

\begin{proof}
  See Theorem 1 of Chapter III of Karatsuba and Voronin \cite{Karatsuba-Voronin-1992}.
\end{proof}

\begin{lemma}\label{conv of g}
  Let $g_{a,b}(n)$ be defined as in (\ref{Zhai Cao}). Then for sufficiently large real parameter $y>1$, there holds
\begin{equation*}
  \sum_{n>y}g^2_{a,b}(n)\ll y^{-\frac{a}{(a+b)b}}.
\end{equation*}
\end{lemma}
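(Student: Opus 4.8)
The plan is to expand the mean square into a sum over pairs of representations and to exploit the coprimality $(a,b)=1$ to parametrize the underlying Diophantine equation, which is what keeps the bound free of any spurious factor $n^{\varepsilon}$. Write $\alpha=\frac{a+2b}{2(a+b)}$ and $\beta=\frac{2a+b}{2(a+b)}$, so that $g_{a,b}(n)=\sum_{h^ar^b=n}h^{-\alpha}r^{-\beta}$ and
\begin{equation*}
\sum_{n>y}g_{a,b}^2(n)=\sum_{\substack{h_1^ar_1^b=h_2^ar_2^b>y}}(h_1h_2)^{-\alpha}(r_1r_2)^{-\beta}.
\end{equation*}
First I would record the structure of the solutions of $h_1^ar_1^b=h_2^ar_2^b$. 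Since $(a,b)=1$, unique factorization shows that $h_1/h_2$ must be the $b$-th power and $r_2/r_1$ the $a$-th power of one and the same reduced fraction $t/s$ with $(s,t)=1$; hence every solution has the form $h_1=t^bH$, $h_2=s^bH$, $r_1=s^aR$, $r_2=t^aR$ with $(s,t)=1$ and $H,R\ge1$, and this correspondence is a bijection, with common value $n=(st)^{ab}H^aR^b$. Using $h_1h_2=(st)^bH^2$ and $r_1r_2=(st)^aR^2$ and simplifying the exponents, the mean square becomes the clean four–fold sum
\begin{equation*}
\sum_{n>y}g_{a,b}^2(n)=\sum_{\substack{(s,t)=1,\;H,R\ge1\\ (st)^{ab}H^aR^b>y}}(st)^{-\gamma}H^{-2\alpha}R^{-2\beta},\qquad \gamma:=b\alpha+a\beta=\frac{a^2+ab+b^2}{a+b}.
\end{equation*}
Here $2\alpha>1$, $2\beta>1$ and $\gamma>1$, so the unrestricted sum converges (recovering the convergence established by Zhai and Cao); the task is to quantify the tail.

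The heart of the matter is to carry out the summation \emph{respecting} the constraint rather than splitting it crudely: a split such as ``one of $(st)^{ab},H^a,R^b$ exceeds $y^{1/3}$'' would degrade the exponent by a constant factor. I would sum over $R$ first. When $(st)^{ab}H^a\le y$ the constraint reads $R>R_0:=\big(y/((st)^{ab}H^a)\big)^{1/b}\ge1$, and since $2\beta>1$,
\begin{equation*}
\sum_{R>R_0}R^{-2\beta}\ll R_0^{1-2\beta}=y^{-\delta}(st)^{ab\delta}H^{a\delta},\qquad \delta:=\frac{2\beta-1}{b}=\frac{a}{b(a+b)},
\end{equation*}
which is precisely the target exponent. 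Multiplying by $(st)^{-\gamma}H^{-2\alpha}$ and computing $ab\delta-\gamma=-b$ and $a\delta-2\alpha=\tfrac{a-2b}{b}$, the surviving sum is $\sum(st)^{-b}H^{(a-2b)/b}$; both exponents are strictly below $-1$ (using $b\ge2$ and $a<b$), so the series converges and this regime contributes $\ll y^{-\delta}$.

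It then remains to treat the complementary regime $(st)^{ab}H^a>y$, where every $R\ge1$ meets the constraint and $\sum_RR^{-2\beta}\ll1$, reducing matters to the three–variable tail $\sum_{(st)^{ab}H^a>y}(st)^{-\gamma}H^{-2\alpha}$. I would estimate this by the same device, summing over $H$ and splitting according to whether $(st)^{ab}\le y$ or not. The leading piece is $\ll y^{-\eta}$ with $\eta:=\frac{2\alpha-1}{a}=\frac{b}{a(a+b)}$ (up to a factor $\log^2y$ when $a=1$, since $ab\eta-\gamma=-a$), while the residual $(st)$-tail decays like $y^{-(\gamma-1)/(ab)+\varepsilon}$. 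Because $a<b$ forces $\delta<\eta$, and a short computation gives $(\gamma-1)/(ab)\ge\delta$ with strict room (as $b\ge2$), each of these is $\ll y^{-\delta}$. Collecting the regimes yields $\sum_{n>y}g_{a,b}^2(n)\ll y^{-\delta}=y^{-a/((a+b)b)}$, as claimed.

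The main obstacle is twofold. Conceptually, it is the Diophantine parametrization: it is exactly this rigidity forced by $(a,b)=1$ that lets one replace the pointwise estimate $\tau_{a,b}(n)\ll n^{\varepsilon}$ (which would spoil the clean exponent) by an exact bijective count. Technically, it is the constraint-respecting summation, since any crude decomposition of the region $\{(st)^{ab}H^aR^b>y\}$ loses a constant factor in the exponent; the correct decay is pinned down only by summing over the critical variable $R$ with the remaining variables ranging over genuinely convergent series, the minimum of the three marginal decay rates $\delta,\eta,(\gamma-1)/(ab)$ being $\delta$ precisely because $a<b$.
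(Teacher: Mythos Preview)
Your argument is correct. The parametrization of the solutions of $h_1^ar_1^b=h_2^ar_2^b$ via $(a,b)=1$ is exactly right (with $H=\gcd(h_1,h_2)$, one checks prime by prime that $h_i/H$ is a perfect $b$-th power, and the rest unwinds as you wrote), and the exponent bookkeeping in the three regimes is accurate: the identities $ab\delta-\gamma=-b$, $a\delta-2\alpha=(a-2b)/b$, $ab\eta-\gamma=-a$, and $(\gamma-1)/(ab)-\delta=(b-1)/(ab)$ all check, so under $1\le a<b$ every auxiliary series either converges outright or is dominated by $y^{-\delta}$ after absorbing at most a power of $\log y$.

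As for comparison: the paper does not supply its own proof here but simply refers to Section~4 of Zhai and Cao \cite{MR2564393}. Your write-up is therefore a self-contained substitute rather than a different route to a displayed argument. The essential ingredients---the bijective parametrization forced by $(a,b)=1$ and the constraint-respecting summation over $R$ first---are the natural ones and presumably coincide in spirit with the cited source; what your exposition makes explicit, and usefully so, is why summing the ``smallest-exponent'' variable $R$ last would lose the sharp exponent, and why $\delta=\min\{\delta,\eta,(\gamma-1)/(ab)\}$ precisely because $a<b$. One minor remark: you invoke $b\ge2$ without stating it, but this is automatic from $1\le a<b$ (and is needed for the $(st)^{-b}$ series and for $(\gamma-1)/(ab)>\delta$), so it would be worth saying so explicitly.
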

\begin{proof}
  See Section 4 of Zhai and Cao \cite{MR2564393}.
\end{proof}

\begin{lemma}\label{Sab T est}
  Suppose that $a$ and $b$ are fixed positive integers, and $T$ is a large parameter. Define
\begin{align*}
  S_{a,b}(T):=&\,\,\mathop{\sum_{h_1^ar_1^b\leqslant T^{100(a+b)}}\qquad\sum_{h_2^ar_2^b\leqslant T^{100(a+b)}}}_{0<\left|(h_1^{a}r_1^{b})^{\frac{1}{a+b}}-(h_2^{a}r_2^{b})^{\frac{1}{a+b}}\right|
  <\frac{1}{10}(h_1h_2)^{\frac{a}{2a+2b}}(r_1r_2)^{\frac{b}{2a+2b}}}
  (h_1h_2)^{-\frac{a+2b}{2(a+b)}}(r_1r_2)^{-\frac{2a+b}{2(a+b)}}\\
  &\,\,\qquad\qquad\times\min\Bigg(T^{\frac{1}{a+b}},\frac{1}{\left|(h_1h_2)^{\frac{a}{2a+2b}}(r_1r_2)^{\frac{b}{2a+2b}}\right|}\Bigg),
\end{align*}
Then we have
\begin{equation*}
  S_{a,b}(T)\ll T^{\frac{1}{a+b}-\frac{1}{(a+b)(a+b-1)}\times\frac{\min(a,b)}{\max(a,b)}}\log^7T.
\end{equation*}
\end{lemma}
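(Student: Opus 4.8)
The plan is to bound $S_{a,b}(T)$ by a dyadic decomposition followed by a count of pairs whose values $h_1^a r_1^b$, $h_2^a r_2^b$ are close. Write $N_i=h_i^ar_i^b$ and $\delta=\bigl|N_1^{1/(a+b)}-N_2^{1/(a+b)}\bigr|$, the quantity appearing in the summation condition, so that the factor in the summand is $\min\bigl(T^{1/(a+b)},\delta^{-1}\bigr)$. First I would split each variable dyadically, $h_i\sim H_i$, $r_i\sim R_i$; since the condition forces $N_1\asymp N_2$, it suffices to treat balanced boxes $H_1\asymp H_2=:H$, $R_1\asymp R_2=:R$, the remaining ranges being empty or trivially small. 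On such a box the weight is constant up to constants, $\asymp H^{-(a+2b)/(a+b)}R^{-(2a+b)/(a+b)}=:W$, and by the mean value theorem $\delta\asymp|N_1-N_2|\,N^{-(a+b-1)/(a+b)}$ with $N:=H^aR^b$. Hence, writing $m=N_1-N_2\in\mathbb Z\setminus\{0\}$, the box contributes
\[
\Sigma_{\mathrm{box}}\asymp W\sum_{m\neq0}\min\Bigl(T^{1/(a+b)},\,N^{(a+b-1)/(a+b)}/|m|\Bigr)\,r_{\mathrm{box}}(m),
\]
where $r_{\mathrm{box}}(m)$ counts box quadruples with $N_1-N_2=m$.

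The decisive input is an upper bound for $R_{\mathrm{box}}(M):=\sum_{0<|m|\le M}r_{\mathrm{box}}(m)$, the number of box quadruples with $0<|N_1-N_2|\le M$. I would prove
\[
R_{\mathrm{box}}(M)\ll N^{\varepsilon}\bigl(HR+MH^{2-a}R^{2-b}\bigr),
\]
in which the second term is the expected area count and the first is the diagonal contribution. The naive point-by-point estimate—fixing $(h_1,r_1)$ and counting the integers $r_2$ (or $h_2$) landing in the resulting short interval—produces an extra factor $\min(H,R)$ on the diagonal term, and this loss would wipe out the saving precisely on the balanced boxes $H\asymp R$. Removing that factor is the crux: I would combine the divisor bound $\tau_{a,b}(n)\ll n^{\varepsilon}$ (to control coincident and clustered representations $n=h^ar^b$, in particular the sub-cases $h_1=h_2$ and $r_1=r_2$) with a genuine lattice-point count for the number of $(h,r)\sim(H,R)$ with $h^ar^b$ in a short interval. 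This is the step I expect to be the main obstacle, both because clustering below the average gap $H^{a-1}R^{b-1}$ must be ruled out and because the logarithmic bookkeeping here—through second-moment and shifted-divisor sums—is what eventually yields the power $\log^7T$.

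With the counting estimate in hand, I would split the $m$-sum at $M_0:=N^{(a+b-1)/(a+b)}T^{-1/(a+b)}$, the point where $\delta^{-1}\asymp T^{1/(a+b)}$: for $|m|\le M_0$ the summand is $\asymp T^{1/(a+b)}$ and one invokes $R_{\mathrm{box}}(M_0)$, while for $|m|>M_0$ the factor is $N^{(a+b-1)/(a+b)}/|m|$ and one sums dyadically in $|m|$ by partial summation. Both ranges give $\Sigma_{\mathrm{box}}\ll N^{\varepsilon}W\bigl(T^{1/(a+b)}HR+N^{(a+b-1)/(a+b)}H^{2-a}R^{2-b}\bigr)$. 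A direct exponent check shows $W\,N^{(a+b-1)/(a+b)}H^{2-a}R^{2-b}\asymp1$, so this part costs only $T^{\varepsilon}$ over all boxes, whereas $W\,T^{1/(a+b)}HR\asymp T^{1/(a+b)}H^{-b/(a+b)}R^{-a/(a+b)}$. Summing the latter over dyadic boxes, the maximum along each level set $H^aR^b=N$ is attained at the corner where the variable carrying the smaller exponent is $\asymp1$, giving $T^{1/(a+b)}N^{-\frac{1}{a+b}\cdot\frac{\min(a,b)}{\max(a,b)}}$; this is largest at the transition scale $N\asymp T^{1/(a+b-1)}$ (where $M_0\asymp1$) and produces exactly the exponent $\frac{1}{a+b}-\frac{1}{(a+b)(a+b-1)}\cdot\frac{\min(a,b)}{\max(a,b)}$. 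The small boxes $N<T^{1/(a+b-1)}$, where $M_0<1$ so every off-diagonal term uses $\delta^{-1}$, are treated identically and are again dominated by this same transition scale. Collecting everything and tracking the logarithmic factors carefully then gives the claimed bound with $\log^7T$.
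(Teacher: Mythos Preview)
The paper does not prove this lemma at all; it simply quotes Lemma~5.2 of Zhai and Cao \cite{MR2564393}. So there is no in-paper argument to compare against, and your proposal is effectively an attempt at an independent proof of that cited result.

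Your overall architecture is the standard one and the bookkeeping is correct: the dyadic localization, the mean-value-theorem linearization $\delta\asymp|m|\,N^{-(a+b-1)/(a+b)}$, the split at $M_0=N^{(a+b-1)/(a+b)}T^{-1/(a+b)}$, the identity $W\,N^{(a+b-1)/(a+b)}H^{2-a}R^{2-b}\asymp1$, and the evaluation of $W\,T^{1/(a+b)}HR=T^{1/(a+b)}H^{-b/(a+b)}R^{-a/(a+b)}$ at the corner box on the level set $N\asymp T^{1/(a+b-1)}$ all check out and land on the stated power of $T$.

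However, the proof is not complete, and the gap is exactly where you say it is. The counting estimate
\[
R_{\mathrm{box}}(M)\ \ll\ N^{\varepsilon}\bigl(HR+MH^{2-a}R^{2-b}\bigr)
\]
is asserted, not established. Fixing $(h_1,r_1)$ and counting $(h_2,r_2)$ one coordinate at a time gives $HR\cdot\min(H,R)$ for the first term, and removing that factor is a genuine statement about the distribution of the integers $h^ar^b$ in short intervals; this is precisely the substance of the Zhai--Cao lemma, not a routine divisor-bound consequence. A second, smaller inconsistency: the pointwise input $\tau_{a,b}(n)\ll n^{\varepsilon}$ that you invoke can only ever produce $T^{\varepsilon}$ at the end, never $\log^7T$. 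The specific power $7$ in Zhai--Cao comes from \emph{average} bounds of the type $\sum_{n\leqslant X}\tau_{a,b}^2(n)\ll X\log X$ fed through the dyadic sums, and swapping $n^{\varepsilon}$ for such averages at every step is not automatic. As written, then, your proposal is a sound strategy sketch whose decisive ingredient is left to exactly the reference the paper cites.
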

\begin{proof}
  See Lemma 5.2 of Zhai and Cao \cite{MR2564393}.
\end{proof}

\section{\texorpdfstring{An analogue of Vorono\"{\i}'s formula for $\Delta_{a,b}(x;\ell_1,M_1,\ell_2,M_2)$}{}}

For Dirichlet divisor problem, there holds a truncated Vorono\"{\i}'s formula
\begin{equation*}
  \Delta(x)=\frac{x^{1/4}}{\sqrt{2}\pi}\sum_{n\leqslant N}\frac{\tau(n)}{n^{3/4}}\cos\bigg(4\pi\sqrt{nx}-\frac{\pi}{4}\bigg)+O\big(x^{1/2+\varepsilon}N^{-1/2}\big)
\end{equation*}
for $1<N\ll x$. It plays an essential role in the investigation of the higher--power moments of $\Delta(x)$. For $\Delta_{a,b}( M_1^aM_2^bx; \ell_1
,M_1,\ell_2,M_2)$, there is not such a convenient formula at hand, but we can use the finite expression of $\psi(u)$, e.g., Lemma \ref{Heath Brown}, and van der Corput's B--process, i.e., Lemma \ref{Vinogradov}, to derive an analogue formula (e.g., (\ref{Delta ab chaifenhou}) below).

\subsection{\texorpdfstring{The $\psi$--expression of $\Delta_{a,b}( M_1^aM_2^bx;\ell_1,M_1,\ell_2,M_2)$}{}}
\indent
\newline
Suppose that $T$ is a large real number and $x$ satisfies $T\leqslant x\leqslant2T$. By using Dirichlet's
classical argument, we write
\begin{equation*}
  \sum_{n\leqslant M_1^aM_2^bx}\tau_{a,b}(n;\ell_1,M_1,\ell_2,M_2)=\sum_{\substack{ n_1^an_2^b\leqslant M_1^aM_2^bx\\n_1\equiv \ell_1\!\!\!\!\!\pmod {M_1}\\n_2\equiv\ell_2 \!\!\!\!\!\pmod{M_2}}}1=\Sigma_1+\Sigma_2-\Sigma_3,
\end{equation*}
where
\begin{equation*}
\begin{aligned}
  \Sigma_1&=\sum_{\substack{{n_1}^{a+b}\leqslant M_1^aM_2^bx\\n_1\equiv \ell_1\!\!\!\!\!\pmod {M_1}}}
  \sum_{\substack{{n_2}^b\leqslant\frac{ M_1^aM_2^bx}{{n_1}^a}\\n_2\equiv\ell_2\!\!\!\!\!\pmod{M_2}}}1,
          \hspace{2cm}
  \Sigma_2=\sum_{\substack{{n_2}^{a+b}\leqslant M_1^aM_2^bx\\n_2\equiv \ell_2\!\!\!\!\!\pmod {M_2}}}
  \sum_{\substack{{n_1}^a\leqslant\frac{ M_1^aM_2^bx}{{n_2}^b}\\n_1\equiv\ell_1\!\!\!\!\!\pmod {M_1}}}1,\\
  \Sigma_3&=\sum_{\substack{{n_1}^{a+b}\leqslant M_1^aM_2^bx\\n_1\equiv \ell_1\!\!\!\!\!\pmod {M_1}}}
  \sum_{\substack{{n_2}^{a+b}\leqslant M_1^aM_2^bx\\n_2\equiv \ell_2\!\!\!\!\!\pmod {M_2}}}1.
\end{aligned}
\end{equation*}
Computing the above three sums directly, it is easy to derive that
\begin{equation}\label{Delta ab chaifen}
  \Delta_{a,b}( M_1^aM_2^bx; \ell_1,M_1,\ell_2,M_2)=F_{12}(x)+F_{21}(x)+O(1),
\end{equation}
where
\begin{equation}\label{def F12}
  F_{12}(x):=-\sum_{\substack{{n_1}^{a+b}\leqslant M_1^aM_2^bx\\n_1\equiv \ell_1\!\!\!\!\!\pmod {M_1}}}
  \psi\bigg(\bigg(\bigg(\frac{M_1}{n_1}\bigg)^ax\bigg)^{\frac{1}{b}}-\frac{\ell_2}{M_2}\bigg),
\end{equation}
\begin{equation}\label{def F21}
  F_{21}(x):=-\sum_{\substack{{n_2}^{a+b}\leqslant M_1^aM_2^bx\\n_2\equiv \ell_2\!\!\!\!\!\pmod {M_2}}}
  \psi\bigg(\bigg(\bigg(\frac{M_2}{n_2}\bigg)^bx\bigg)^{\frac{1}{a}}-\frac{\ell_1}{M_1}\bigg).
\end{equation}
\begin{remark}
  In fact $F_{12}(x)$ and $F_{21}(x)$ depend on $a,b,\ell_1,M_1,\ell_2,M_2$, although this is not demonstrated explicitly. Similar situations happen to the following notations such as $R_{12}(x;H)$, $G_{12}(x;H)$, etc.
\end{remark}
Suppose that $H$ is a parameter subject to $T^{\varepsilon}\ll H\ll T^{100(a+b)}$, which is to be determined later. By Lemma \ref{Heath Brown}, one has
\begin{equation}\label{F12 chaifen}
  F_{12}(x)=R_{12}(x;H)+G_{12}(x;H),
\end{equation}
where
\begin{equation}\label{R12 def}
  R_{12}(x;H):=\frac{1}{2\pi i}\sum_{1\leqslant|h|\leqslant H}\frac{1}{h}
  \sum_{\substack{{n_1}^{a+b}\leqslant M_1^aM_2^bx\\n_1\equiv \ell_1
  \!\!\!\!\!\pmod {M_1}}}
  e\bigg(h\bigg(\bigg(\frac{M_1}{n_1}\bigg)^ax\bigg)^{\frac{1}{b}}-\frac{h\ell_2}{M_2}\bigg),
\end{equation}
\begin{equation}\label{G12 def}
  G_{12}(x;H):=O\Bigg(\sum_{\substack{{n_1}^{a+b}\leqslant M_1^aM_2^bx\\n_1\equiv \ell_1\!\!\!\!\!\pmod {M_1}}}
  \min\Bigg(1,\frac{1}{H\big\|\big(\big(\frac{M_1}{n_1}\big)^ax\big)^{\frac{1}{b}}-\frac{\ell_2}{M_2}\big\|}\Bigg)\Bigg).
\end{equation}
Define
\begin{equation*}
\begin{aligned}
  c:=&\,\,(2ab)^{ab},\hspace{0.5cm}  J:=\,\,(\mathscr{L}/(a+b)-\log\mathscr{L})\log^{-1}c,  \hspace{0.5cm}\mathscr{L}:=\log T,\\
  m_j:=&\,\,( M_1^aM_2^bx)^{\frac{1}{a+b}}c^{-j}\qquad(j\geqslant0).
\end{aligned}
\end{equation*}
It is easy to see that
\begin{equation*}
  c^{J}\asymp T^{\frac{1}{a+b}}\mathscr{L}^{-1}.
\end{equation*}
By a splitting argument, we get
\begin{equation}\label{R12 chaifen}
  R_{12}(x;H)=-\frac{\Sigma_{12}}{2\pi i}+\frac{\overline{\Sigma_{12}}}{2\pi i}+O(\mathscr{L}^2),
\end{equation}
where
\begin{equation}\label{Sigma 12 def}
  \Sigma_{12}=\sum_{1\leqslant h\leqslant H}\frac{1}{h}\sum_{j=0}^{J}S(x,h,j),
\end{equation}
\begin{equation}\label{S shj def}
  S(x,h,j):=\sum_{\substack{m_{j+1}<n_1\leqslant m_j\\n_1\equiv \ell_1
  \!\!\!\!\!\pmod {M_1}}}
  e\bigg(-h\bigg(\bigg(\frac{M_1}{n_1}\bigg)^ax\bigg)^{\frac{1}{b}}+\frac{h\ell_2}{M_2}\bigg).
\end{equation}
Define
\begin{equation*}
\begin{aligned}
  c_1(a,b):=&\,\,a^{\frac{b}{2(a+b)}}b^{\frac{a}{2(a+b)}}(a+b)^{-\frac{1}{2}},\\
  c_2(a,b):=&\,\,\left(\frac{a}{b}\right)^{\frac{b}{a+b}}+\left(\frac{b}{a}\right)^{\frac{a}{a+b}}.
\end{aligned}
\end{equation*}
It is easy to check that
\begin{equation*}
  c_1(a,b)=c_1(b,a),\hspace{2cm}c_2(a,b)=c_2(b,a).
\end{equation*}
Applying Lemma \ref{Vinogradov} to (\ref{S shj def}), we get
\begin{align}\label{S(s,h,j)-expan}
           S(s,h,j)
  = &\,\,c_1(a,b)x^{\frac{1}{2(a+b)}}{\sideset{}{'}\sum_{n_{j,h}(a,b)\leqslant r\leqslant
                n_{j+1,h}(a,b)}}h^{\frac{b}{2(a+b)}}r^{-\frac{a+2b}{2(a+b)}}
                     \nonumber \\
  &\,\,\times e\bigg(-c_2(a,b)x^{\frac{1}{a+b}}(h^br^a)^{\frac{1}{a+b}}+
       \frac{h\ell_2}{M_2}+\frac{r\ell_1}{M_1}-\frac{1}{8}\bigg)+O(\mathscr{L}),
\end{align}
where
\begin{equation*}
  n_{j,h}(a,b):=n_{j,h}(a,b;M_1,M_2)=\frac{a}{b}h(2ab)^{a(a+b)j}\frac{M_1}{M_2}.
\end{equation*}
From (\ref{Sigma 12 def}) and (\ref{S(s,h,j)-expan}), we deduce that
\begin{align}\label{Sigma 12 fenduan}
  \Sigma_{12}=&\,\,c_1(a,b)x^{\frac{1}{2(a+b)}}\sum_{1\leqslant h\leqslant H}
  \sum_{\frac{a}{b}h\frac{M_1}{M_2}\leqslant r\leqslant n_{J+1,h}(a,b)}h^{-\frac{2a+b}{2(a+b)}}r^{-\frac{a+2b}{2(a+b)}}\notag\\
  &\,\,\times e\left(-c_2(a,b)x^{\frac{1}{a+b}}(h^br^a)^{\frac{1}{a+b}}+\frac{h\ell_2}{M_2}+\frac{r\ell_1}{M_1}-\frac{1}{8}\right)+O(\mathscr{L}^3).
\end{align}
Combining (\ref{R12 chaifen}) and (\ref{Sigma 12 fenduan}), one obtains
\begin{equation*}
  R_{12}(x;H)=R^{\,*}_{12}(a,b;x)+O(\mathscr{L}^3),
\end{equation*}
where
\begin{equation*}
\begin{aligned}
  R^{\,*}_{12}(a,b;x):=&\,\,\frac{c_1(a,b)}{\pi}x^{\frac{1}{2(a+b)}}\sum_{1\leqslant h\leqslant H}
  {\sideset{}{'}\sum_{\frac{a}{b}h\frac{M_1}{M_2}\leqslant r\leqslant n_{J+1,h}(a,b)}}h^{-\frac{2a+b}{2(a+b)}}r^{-\frac{a+2b}{2(a+b)}}\\
  &\,\,\times\cos\left(2\pi c_2(a,b)(h^br^a)^{\frac{1}{a+b}}x^{\frac{1}{a+b}}-2\pi\left(\frac{h\ell_2}{M_2}+\frac{r\ell_1
  }{M_1}+\frac{1}{8}\right)\right).
\end{aligned}
\end{equation*}
Define
\begin{equation*}
\begin{aligned}
   g(a,b;n,H,J):=&\,\,{\sideset{}{'}\sum_{\substack{n=h^br^a\\1\leqslant h\leqslant H\\ \frac{aM_1}{bM_2}h\leqslant r\leqslant n_{J+1,h}(a,b)}}}h^{-\frac{2a+b}{2(a+b)}}r^{-\frac{a+2b}{2(a+b)}},\\
   g(a,b;n):=&\,\,{\sideset{}{'}\sum_{\substack{n=h^br^a\\ \frac{aM_1}{bM_2}h\leqslant r}}}h^{-\frac{2a+b}{2(a+b)}}r^{-\frac{a+2b}{2(a+b)}}.
\end{aligned}
\end{equation*}
It is easy to check that if $h^br^a\leqslant\min(H^{a+b},T^{a/b})\mathscr{L}^{-a-a^2/b-1}$, $\frac{aM_1}{bM_2}h\leqslant r$, then it follows $h\leqslant H$ and $r\leqslant n_{J+1,h}(a,b)$, and thus
\begin{equation}\label{g ab qudiao HJ}
  g(a,b;n,H,J)=g(a,b;n),\hspace{1cm}n\leqslant\min(H^{a+b},T^{a/b})\mathscr{L}^{-a-a^2/b-1}.
\end{equation}
Therefore, we derive that
\begin{equation*}
\begin{aligned}
  {R_{12}}^{*}(a,b;x)=&\,\,\frac{c_1(a,b)}{\pi}x^{\frac{1}{2(a+b)}}\sum_{1\leqslant n\leqslant H^b(n_{J+1,H}(a,b))^a}g(a,b;n,H,J)\\
  &\,\,\times\cos\left(2\pi c_2(a,b)x^{\frac{1}{a+b}}n^{\frac{1}{a+b}}-2\pi\left(\frac{h\ell_2}{M_2}+\frac{r\ell_1
  }{M_1}+\frac{1}{8}\right)\right).
\end{aligned}
\end{equation*}
For $F_{21}(x)$, we have
\begin{equation*}
  F_{21}(x)=R_{21}(x;H)+G_{21}(x;H),
\end{equation*}
where
\begin{equation}\label{R21 def}
  R_{21}(x;H):=\frac{1}{2\pi i}\sum_{1\leqslant|h|\leqslant H}\frac{1}{h}\sum_{\substack{{n_2}^{a+b}\leqslant  M_1^aM_2^bx\\n_2\equiv \ell_2\!\!\!\!\!\pmod {M_2}}}
  e\bigg(h\bigg(\bigg(\frac{M_2}{n_2}\bigg)^bx\bigg)^{\frac{1}{a}}-\frac{h\ell_1}{M_1}\bigg),
\end{equation}
\begin{equation}\label{G21 def}
  G_{21}(x;H):=O\Bigg(\sum_{\substack{{n_2}^{a+b}\leqslant M_1^aM_2^bx\\n_2\equiv \ell_2\!\!\!\!\!\pmod {M_2}}}
  \min\Bigg(1,\frac{1}{H\big\|\big(\big(\frac{M_2}{n_2}\big)^bx\big)^{\frac{1}{a}}
  -\frac{\ell_1}{M_1}\big\|}\Bigg)\Bigg).
\end{equation}
Similarly, we have
\begin{equation*}
  R_{21}(x;H)=R_{21}^{\,*}(b,a;x)+O(\mathscr{L}^3),
\end{equation*}
where
\begin{equation*}
\begin{aligned}
  R_{21}^{\,*}(b,a;x)=&\,\,\frac{c_1(b,a)}{\pi}x^{\frac{1}{2(a+b)}}\sum_{1\leqslant n\leqslant H^a(n_{J+1,H}(b,a))^b}g(b,a;n,H,J)\\
  &\,\,\times\cos\left(2\pi c_2(b,a)x^{\frac{1}{a+b}}n^{\frac{1}{a+b}}-2\pi\left(\frac{h\ell_1}{M_1}+\frac{r\ell_2}{M_2}+\frac{1}{8}\right)\right),\\
  g(b,a;n,H,J):=&\,\,{\sideset{}{'}\sum_{\substack{n=h^ar^b\\1\leqslant h\leqslant H\\ \frac{bM_2}{aM_1}h\leqslant r\leqslant n_{J+1,h}(b,a)}}}h^{-\frac{a+2b}{2(a+b)}}r^{-\frac{2a+b}{2(a+b)}},\\
  g(b,a;n):=&\,\,{\sideset{}{'}\sum_{\substack{n=h^ar^b\\ \frac{bM_2}{aM_1}h\leqslant r}}}h^{-\frac{a+2b}{2(a+b)}}r^{-\frac{2a+b}{2(a+b)}}.
\end{aligned}
\end{equation*}
It is easy to check that if $h^ar^b\leqslant\min(H^{a+b},T^{b/a})\mathscr{L}^{-b-b^2/a-1}$, $\frac{bM_2}{aM_1}h\leqslant r$, then it follows $h\leqslant H$ and $r\leqslant n_{J+1,h}(b,a)$, and thus
\begin{equation}\label{g ba qudiao HJ}
  g(b,a;n,H,J)=g(b,a;n)\hspace{1cm}n\leqslant\min(H^{a+b},T^{b/a})\mathscr{L}^{-b-b^2/a-1}.
\end{equation}
Suppose that $z$ is a parameter subject to  $T^{\varepsilon}\ll z\leqslant\min(H^{a+b},T^{a/b})\mathscr{L}^{-a-a^2/b-1}$. Define
\begin{equation*}
\begin{aligned}
   R_{12,1}^{\,*}(a,b;x):=&\,\,\frac{c_1(a,b)}{\pi}x^{\frac{1}{2(a+b)}}\sum_{1\leqslant n\leqslant z}g(a,b;n)\\
   &\,\,\times\cos\left(2\pi c_2(a,b)x^{\frac{1}{a+b}}n^{\frac{1}{a+b}}-2\pi\left(\frac{h\ell_2}{M_2}+\frac{r\ell_1}{M_1}+\frac{1}{8}\right)\right),\\
   R_{12,2}^{\,*}(a,b;x):=&\,\,R_{12}^{\,*}(a,b;x)-R_{12,1}^{\,*}(a,b;x),\\
   R_{21,1}^{\,*}(b,a;x):=&\,\,\frac{c_1(b,a)}{\pi}x^{\frac{1}{2(a+b)}}\sum_{1\leqslant n\leqslant z}g(b,a;n)\\
   &\,\,\times\cos\left(2\pi c_2(b,a)x^{\frac{1}{a+b}}n^{\frac{1}{a+b}}-2\pi\left(\frac{h\ell_1
   }{M_1}+\frac{r\ell_2}{M_2}+\frac{1}{8}\right)\right),\\
   R_{21,2}^{\,*}(b,a;x):=&\,\,R_{21}^{\,*}(b,a;x)-R_{21,1}^{\,*}(b,a;x).
\end{aligned}
\end{equation*}
By the definition of $g(a,b;n)$ and $g(b,a;n)$, and exchanging the order of $h$ and $r$ in $R_{21,1}^{\,*}(b,a;x)$, we find that
\begin{equation*}
\begin{aligned}
  &\,\,\sum_{1\leqslant n\leqslant z}g(b,a;n)\cos\left(2\pi c_2(b,a)x^{\frac{1}{a+b}}n^{\frac{1}{a+b}}-2\pi\left(\frac{h\ell_1}{M_1}+\frac{r\ell_2}{M_2}+\frac{1}{8}\right)\right)\\
  =&\,\,\sideset{}{'}\sum_{1\leqslant n\leqslant z}{\sum_{\substack{n=h^ar^b\\ \frac{bM_2}{aM_1}h\leqslant r}}}h^{-\frac{a+2b}{2(a+b)}}r^{-\frac{2a+b}{2(a+b)}}
  \cos\left(2\pi c_2(a,b)x^{\frac{1}{a+b}}n^{\frac{1}{a+b}}-2\pi\left(\frac{h\ell_1}{M_1}+\frac{r\ell_2}{M_2}+\frac{1}{8}\right)\right),
\end{aligned}
\end{equation*}
and
\begin{equation*}
\begin{aligned}
  &\,\,\sum_{1\leqslant n\leqslant z}g(a,b;n)\cos\left(2\pi c_2(a,b)x^{\frac{1}{a+b}}n^{\frac{1}{a+b}}-2\pi\left(\frac{h\ell_2}{M_2}+\frac{r\ell_1}{M_1}+\frac{1}{8}\right)\right)\\
  =&\,\,\sideset{}{'}\sum_{1\leqslant n\leqslant z}{\sum_{\substack{n=h^br^a\\ \frac{aM_1}{bM_2}h\leqslant r}}}h^{-\frac{2a+b}{2(a+b)}}r^{-\frac{a+2b}{2(a+b)}}
  \cos\left(2\pi c_2(a,b)x^{\frac{1}{a+b}}n^{\frac{1}{a+b}}-2\pi\left(\frac{h\ell_2}{M_2}+\frac{r\ell_1}{M_1}+\frac{1}{8}\right)\right)\\
  =&\,\,\sideset{}{'}\sum_{1\leqslant n\leqslant z}{\sum_{\substack{n=h^ar^b\\ \frac{aM_1}{bM_2}r\leqslant h}}}h^{-\frac{a+2b}{2(a+b)}}r^{-\frac{2a+b}{2(a+b)}}
  \cos\left(2\pi c_2(a,b)
  x^{\frac{1}{a+b}}n^{\frac{1}{a+b}}-2\pi\left(\frac{h\ell_1}{M_1}+\frac{r\ell_2}{M_2}+\frac{1}{8}\right)\right),
\end{aligned}
\end{equation*}
where we use $c_2(a,b)=c_2(b,a)$. Thus, we have
\begin{equation*}
\begin{aligned}
   &\,\,R_{12,1}^{\,*}(a,b;x)+R_{21,1}^{\,*}(b,a;x)\\
   =&\,\,\frac{c_1(a,b)}{\pi}x^{\frac{1}{2(a+b)}}\sum_{1\leqslant n\leqslant z}\sum_{n=h^ar^b}h^{-\frac{a+2b}{2(a+b)}}r^{-\frac{2a+b}{2(a+b)}}
   \cos\left(2\pi c_2(a,b)x^{\frac{1}{a+b}}n^{\frac{1}{a+b}}-2\pi\left(\frac{h\ell_1}{M_1}+\frac{r\ell_2}{M_2}+\frac{1}{8}\right)\right).
\end{aligned}
\end{equation*}
Define
\begin{equation*}
\begin{aligned}
  \Delta^{*}_{a,b}(x;z):=&\,\,\frac{c_1(a,b)}{\pi}x^{\frac{1}{2(a+b)}}\sum_{1\leqslant n\leqslant z}\sum_{n=h^ar^b}h^{-\frac{a+2b}{2(a+b)}}r^{-\frac{2a+b}{2(a+b)}}\\
  &\,\,\times\cos\left(2\pi c_2(a,b)x^{\frac{1}{a+b}}n^{\frac{1}{a+b}}-2\pi\left(\frac{h\ell_1}{M_1}+\frac{r\ell_2}{M_2}+\frac{1}{8}\right)\right).
\end{aligned}
\end{equation*}
Based on the above arguments, we obtain
\begin{align}\label{Delta ab chaifenhou}
  &\,\,\Delta_{a,b}( M_1^aM_2^bx; \ell_1,M_1,\ell_2,M_2)=\Delta^{*}_{a,b}(x;z)+E_{a,b}(x),\nonumber\\
  &\,\,E_{a,b}(x):=-R_{12,2}^{\,*}(a,b;x)-R_{21,2}^{\,*}(b,a;x)+G_{12}(x;H)+G_{21}(x;H)+O(\mathscr{L}^3).
\end{align}
The formula (\ref{Delta ab chaifenhou}) can be regarded as a truncated Vorono\"{\i}'s formula.

\section{Proof of Theorem \ref{Theorem}}
In this section, we shall finish the proof of Theorem \ref{Theorem}. It suffices for us to evaluate the integral
\begin{equation*}
 \int_{T}^{2T}\Delta^2_{a,b}(x;\ell_1,M_1,\ell_2,M_2)\mathrm{d}x,
\end{equation*}
where $T\geqslant10$ is a large parameter.
\subsection{\texorpdfstring{Mean square of $\Delta^{*}_{a,b}(x;z)$}{}}
\indent
\newline
Suppose that $H$ and $z$ are parameters, which satisfy
\begin{equation*}
  T^{\varepsilon}\ll H\ll T^{100(a+b)},\qquad T^{\varepsilon}\ll z\leqslant\min\left(H^{a+b},T^{\frac{a}{b}}\right)\mathscr{L}^{-b-\frac{b^2}{a}-1}.
\end{equation*}
By the elementary formula
\begin{equation*}
  \cos{u}\cos{v}=\frac{1}{2}(\cos(u-v)+\cos(u+v)),
\end{equation*}
we may write
\begin{align}\label{S 123 def}
  |\Delta^{*}_{a,b}(x;z)|^2=&\,\,\frac{c_1^2(a,b)}{\pi^2}x^{\frac{1}{a+b}}\sum_{1\leqslant n_1,n_2\leqslant z}\sum_{n_1=h_1^ar_1^b}h_1^{-\frac{a+2b}{2(a+b)}}r_1^{-\frac{2a+b}{2(a+b)}}
  \sum_{n_2=h_2^ar_2^b}h_2^{-\frac{a+2b}{2(a+b)}}r_2^{-\frac{2a+b}{2(a+b)}}\nonumber\\
  &\,\,\times\cos\left(-2\pi\left(\frac{r_1\ell_2}{M_2}+\frac{h_1\ell_1}{M_1}+\frac{1}{8}\right)+2\pi\left(\frac{r_2\ell_2}{M_2}+\frac{h_2\ell_1}{M_1}+\frac{1}{8}\right)\right)\nonumber\\
  =&\,\,S_1(x)+S_2(x)+S_3(x),
\end{align}
where
\begin{align*}
          S_1(x)
 = & \,\, \frac{c_1^2(a,b)}{2\pi^2}x^{\frac{1}{a+b}}\sum_{1\leqslant n\leqslant z}
          \sum_{n=h_1^ar_1^b=h_2^ar_2^b}h_1^{-\frac{a+2b}{2(a+b)}}r_1^{-\frac{2a+b}{2(a+b)}}
          h_2^{-\frac{a+2b}{2(a+b)}}r_2^{-\frac{2a+b}{2(a+b)}}
                \nonumber \\
   & \,\, \times\cos\left(2\pi\left(\frac{(r_2-r_1)\ell_2}{M_2}+\frac{(h_2-h_1)\ell_1}{M_1}\right)\right),
                \nonumber \\
          S_2(x)
   =&\,\, \frac{c_1^2(a,b)}{2\pi^2}x^{\frac{1}{a+b}}\sum_{\substack{1\leqslant n_1,n_2\leqslant z\\n_1\neq n_2}}
          \sum_{n_1=h_1^ar_1^b}h_1^{-\frac{a+2b}{2(a+b)}}r_1^{-\frac{2a+b}{2(a+b)}}
          \sum_{n_2=h_2^ar_2^b}h_2^{-\frac{a+2b}{2(a+b)}}r_2^{-\frac{2a+b}{2(a+b)}}
                \nonumber \\
    & \,\, \times\cos\left(2\pi c_2(a,b)x^{\frac{1}{a+b}}\left(n_1^{\frac{1}{a+b}}-n_2^{\frac{1}{a+b}}\right)
           +2\pi\left(\frac{(r_2-r_1)\ell_2}{M_2}+\frac{(h_2-h_1)\ell_1}{M_1}\right)\right),
                \nonumber \\
           S_3(x)
  = & \,\, \frac{c_1^2(a,b)}{2\pi^2}x^{\frac{1}{a+b}}\sum_{1\leqslant n_1,n_2\leqslant z}
           \sum_{n_1=h_1^ar_1^b}h_1^{-\frac{a+2b}{2(a+b)}}r_1^{-\frac{2a+b}{2(a+b)}}
           \sum_{n_2=h_2^ar_2^b}h_2^{-\frac{a+2b}{2(a+b)}}r_2^{-\frac{2a+b}{2(a+b)}}
                \nonumber \\
    & \,\, \times\sin\left(2\pi c_2(a,b)x^{\frac{1}{a+b}}
            \left(n_1^{\frac{1}{a+b}}+n_2^{\frac{1}{a+b}}\right)
            -2\pi\left(\frac{(r_1+r_2)\ell_2}{M_2}+\frac{(h_1+h_2)\ell_1}{M_1}\right)\right).
\end{align*}
It follows from Remark \ref{conv of g*} and Lemma \ref{conv of g} that
\begin{align}\label{S1 int eva}
\int_{T}^{2T}S_1(x)\mathrm{d}x=&\,\,\frac{c_1^2(a,b)}{2\pi^2}\sum_{n=1}^{\infty}{g^{*}_{a,b}}(n)
\int_{T}^{2T}x^{\frac{1}{a+b}}\mathrm{d}x+O\left(\sum_{n>z}{g^2_{a,b}}(n)\int_{T}^{2T}x^{\frac{1}{a+b}}\mathrm{d}x\right)\nonumber\\
  =&\,\,\frac{c_1^2(a,b)}{2\pi^2}\sum_{n=1}^{\infty}{g^{*}_{a,b}}(n)\int_{T}^{2T}x^{\frac{1}{a+b}}\mathrm{d}x
  +O\Big(T^{\frac{1+a+b}{a+b}}z^{-\frac{a}{(a+b)b}}\Big).
\end{align}
By the first derivative estimate, and changing the order of integration and summation, we derive that
\begin{equation*}
\begin{aligned}
  \int_{T}^{2T}S_3(x)\mathrm{d}x\ll&\,\,\sum_{1\leqslant n_1,n_2\leqslant z}g_{a,b}(n_1)g_{a,b}(n_2)\frac{T}{{n_1}^{\frac{1}{a+b}}+{n_2}^{\frac{1}{a+b}}}\\
  \ll&\,\,T\sum_{1\leqslant n_1,n_2\leqslant z}g_{a,b}(n_1)g_{a,b}(n_2)\frac{1}{(n_1n_2)^{\frac{1}{2(a+b)}}}\\
  \ll&\,\,T\left(\sum_{n\leqslant z}\frac{g_{a,b}(n)}{n^{\frac{1}{2(a+b)}}}\right)^2,
\end{aligned}
\end{equation*}
where $g_{a,b}(n)$ is defined by (\ref{Zhai Cao}), and in the second step we used the well--known inequality $\alpha^2+\beta^2\geqslant2\alpha\beta$. By Euler's product, we know that, for $\Re s>1$, there holds
\begin{equation*}
  \sum_{n=1}^{\infty}\frac{g_{a,b}(n)}{n^s}=\zeta\left(as+\frac{a+2b}{2a+2b}\right)\zeta\left(bs+\frac{2a+b}{2a+2b}\right),
\end{equation*}
which can be continued meromorphically to the whole complex plane with a double pole at $s=1/2(a+b)$. Thus, we use  Perron's formula to deduce that
\begin{equation*}
  \sum_{n\leqslant X}g_{a,b}(n)\ll X\log X,
\end{equation*}
which implies that
\begin{equation}\label{g ab(n) par sum}
  \sum_{n\leqslant X}g_{a,b}(n)n^{-\frac{1}{2(a+b)}}\ll\log^2X.
\end{equation}
From the above arguments, we get
\begin{equation}\label{S3 int est}
  \int_{T}^{2T}S_3(x)\mathrm{d}x\ll T\mathscr{L}^4.
\end{equation}
Now we consider the contribution of $S_2(x)$. Write
\begin{equation}\label{S2 chaifen}
  S_2(x)=S_{21}(x)-S_{22}(x),
\end{equation}
where
\begin{align*}
         S_{21}(x)
= & \,\, \frac{c_1^2(a,b)}{2\pi^2}x^{\frac{1}{a+b}}\mathop{\sum_{n_1,n_2\leqslant z}
         \sum_{n_1=h_1^ar_1^b}\sum_{n_2=h_2^ar_2^b}}_{\big|n_1^{\frac{1}{a+b}}-n_2^{\frac{1}{a+b}}\big|\geqslant
         \frac{(n_1n_2)^{\frac{1}{2(a+b)}}}{10}}h_1^{-\frac{a+2b}{2(a+b)}}r_1^{-\frac{2a+b}{2(a+b)}}
         h_2^{-\frac{a+2b}{2(a+b)}}r_2^{-\frac{2a+b}{2(a+b)}}
               \nonumber \\
  & \,\, \times\cos\left(2\pi c_2(a,b)x^{\frac{1}{a+b}}
         \Big(n_1^{\frac{1}{a+b}}-n_2^{\frac{1}{a+b}}\Big)+2\pi\left(\frac{(r_2-r_1)\ell_2}{M_2}+
         \frac{(h_2-h_1)\ell_1}{M_1}\right)\right),
                \nonumber \\
         S_{22}(x)
= & \,\, \frac{c_1^2(a,b)}{2\pi^2}x^{\frac{1}{a+b}}\mathop{\sum_{n_1,n_2\leqslant z}
         \sum_{n_1=h_1^ar_1^b}\sum_{n_2=h_2^ar_2^b}}_{\big|n_1^{\frac{1}{a+b}}-n_2^{\frac{1}{a+b}}\big|
         <\frac{(n_1n_2)^{\frac{1}{2(a+b)}}}{10}}h_1^{-\frac{a+2b}{2(a+b)}}r_2^{-\frac{2a+b}{2(a+b)}}
         h_2^{-\frac{a+2b}{2(a+b)}}r_2^{-\frac{2a+b}{2(a+b)}}
                 \nonumber \\
  & \,\, \times\cos\left(2\pi c_2(a,b)
         x^{\frac{1}{a+b}}\Big(n_1^{\frac{1}{a+b}}-n_2^{\frac{1}{a+b}}\Big)
         +2\pi\left(\frac{(r_2-r_1)\ell_2}{M_2}+\frac{(h_2-h_1)\ell_1}{M_1}\right)\right).
\end{align*}
Similar to the case $S_3(x)$, we have
\begin{equation*}
  \int_{T}^{2T}S_{21}(x)\mathrm{d}x\ll T\mathscr{L}^4.
\end{equation*}
By first derivative test and Lemma \ref{Sab T est}, we get
\begin{equation*}
\begin{aligned}
  \int_{T}^{2T}S_{22}(x)\mathrm{d}x\ll&\,\,T\sum_{\substack{n_1,n_2\leqslant z\\\big|n_1^{\frac{1}{a+b}}-n_2^{\frac{1}{a+b}}\big|<\frac{(n_1n_2)^{\frac{1}{2(a+b)}}}{10}}}
  g_{a,b}(n_1)g_{a,b}(n_2)\min\Big(T^{\frac{1}{a+b}},\big|n_1^{\frac{1}{a+b}}-{n_2}^{\frac{1}{a+b}}\big|^{-1}\Big)\\
  \ll&\,\,TS_{a,b}(T)\ll T^{1+\frac{1}{a+b}-\frac{a}{b(a+b)(a+b-1)}}\mathscr{L}^7.
\end{aligned}
\end{equation*}
From the above two estimates, we get
\begin{equation}\label{S2 int est}
  \int_{T}^{2T}S_2(x)\mathrm{d}x\ll T^{1+\frac{1}{a+b}-\frac{a}{b(a+b)(a+b-1)}}\mathscr{L}^7.
\end{equation}
From (\ref{S 123 def}), (\ref{S1 int eva}), (\ref{S3 int est}) and (\ref{S2 int est}), one has
\begin{align}\label{Delta* ab int eva}
  \int_{T}^{2T}|\Delta^{*}_{a,b}(x;z)|^2\mathrm{d}x=&\,\,\frac{c_1^2(a,b)}{2\pi^2}\sum_{n=1}^{\infty}{g^{*}_{a,b}}(n)\int_{T}^{2T}x^{\frac{1}{a+b}}\mathrm{d}x\nonumber\\
  &\,\,+O\left(T^{\frac{1+a+b}{a+b}}z^{-\frac{a}{(a+b)b}}+T^{\frac{1+a+b}{a+b}-\frac{a}{b(a+b)(a+b-1)}}\mathscr{L}^7\right).
\end{align}

\subsection{\texorpdfstring{Mean squares of ${R_{12,2}}^{*}(a,b;x)$ and ${R_{21,2}}^{*}(b,a;x)$}{}}
\indent
\newline
In this subsection, we shall investigate the mean squares of ${R_{12,2}}^{*}(a,b;x)$ and ${R_{21,2}}^{*}(b,a;x)$. Recall that
\begin{equation*}
\begin{aligned}
  R_{12,2}^{\,*}(a,b;x)=&\,\,\frac{c_1(a,b)}{\pi}x^{\frac{1}{2(a+b)}}\sum_{z<n\leqslant H^b(n_{J+1,H}(a,b))^a}g(a,b;n,H,J)\\
  &\,\,\times\cos\left(2\pi c_2(a,b)x^{\frac{1}{a+b}}n^{\frac{1}{a+b}}-2\pi\left(\frac{h\ell_2}{M_2}+\frac{r\ell_1}{M_1}+\frac{1}{8}\right)\right).
\end{aligned}
\end{equation*}
Thus, we have
\begin{align}\label{S4 5 def}
  |R_{12,2}^{\,*}(a,b;x)|^2\ll&\,\,x^{\frac{1}{a+b}}\sum_{z<n_1,n_2\leqslant H^b(n_{J+1,H}(a,b))^a}g(a,b;n_1,H,J)g(a,b;n_2,H,J)\notag\\
  &\,\,\times e\left(c_2(a,b)x^{\frac{1}{a+b}}\left({n_1}^{\frac{1}{a+b}}-{n_2}^{\frac{1}{a+b}}\right)+\frac{(h_2-h_1)\ell_2}{M_2}+\frac{(r_2-r_1)\ell_1}{M_1}\right)\notag\\
  =&\,\,S_4(x)+S_5(x),
\end{align}
where
\begin{equation*}
\begin{aligned}
  S_4(x):=&\,\,x^{\frac{1}{a+b}}\sum_{z<n\leqslant H^b(n_{J+1,H}(a,b))^a}g^2(a,b;n,H,J)e\left(\frac{(h_2-h_1)\ell_2}{M_2}+\frac{(r_2-r_1)\ell_1}{M_1}\right),\\
  S_5(x):=&\,\,x^{\frac{1}{a+b}}\sum_{\substack{z<n_1,n_2\leqslant H^b(n_{J+1,H}(a,b))^a\\n_1\neq n_2}}g(a,b;n_1,H,J)g(a,b;n_2,H,J)\\
  &\,\,\times e\left(c_2(a,b)x^{\frac{1}{a+b}}\left({n_1}^{\frac{1}{a+b}}-{n_2}^{\frac{1}{a+b}}\right)+\frac{(h_2-h_1)\ell_2}{M_2}+\frac{(r_2-r_1)\ell_1}{M_1}\right).
\end{aligned}
\end{equation*}
It is easy to see that
\begin{equation*}
  \left|g^2(a,b;n,H,J)e\left(\frac{(h_2-h_1)\ell_2}{M_2}+\frac{(r_2-r_1)\ell_1}{M_1}\right)\right|\leqslant g^2(a,b;n,H,J)\leqslant g_{a,b}^2(n),
\end{equation*}
which combined with Lemma \ref{conv of g} yields
\begin{equation}\label{S4 int est}
  \int_{T}^{2T}S_4(x)\mathrm{d}x\ll \sum_{n>z}g_{a,b}^2(n)\int_{T}^{2T}x^{\frac{1}{a+b}}\mathrm{d}x\ll T^{1+\frac{1}{a+b}}z^{-\frac{a}{b(a+b)}}.
\end{equation}
By the first derivative test and the inequality $g(a,b;n,H,J)\leqslant g_{a,b}(n)$, we get
\begin{align}\label{int S5 chaifen}
  \int_{T}^{2T}S_5(x)\mathrm{d}x\ll&\,\,T\sum_{\substack{z\leqslant n_1,n_2\leqslant H^bT^a\\n_1\neq n_2}}g_{a,b}(n_1)g_{a,b}(n_2)
  \min\Big(T^{\frac{1}{a+b}},\big|n_1^{\frac{1}{a+b}}-n_2^{\frac{1}{a+b}}\big|^{-1}\Big)\nonumber\\
  =& \,\,T(\Sigma_4+\Sigma_5),
\end{align}
where
\begin{equation*}
\begin{aligned}
  \Sigma_4:=&\,\,\sum_{\substack{z<n_1,n_2\leqslant H^bT^a\\ 0<\big|n_1^{\frac{1}{a+b}}-n_2^{\frac{1}{a+b}}\big|\leqslant\frac{1}{10}(n_1n_2)^{\frac{1}{2(a+b)}}}}
  g_{a,b}(n_1)g_{a,b}(n_2)\min\Big(T^{\frac{1}{a+b}},\big|n_1^{\frac{1}{a+b}}-n_2^{\frac{1}{a+b}}\big|^{-1}\Big),\\
  \Sigma_5:=&\,\,\sum_{\substack{z<n_1,n_2\leqslant H^bT^a\\ \big|n_1^{\frac{1}{a+b}}-n_2^{\frac{1}{a+b}}\big|>\frac{1}{10}(n_1n_2)^{\frac{1}{2(a+b)}}}}
  g_{a,b}(n_1)g_{a,b}(n_2)\min\Big(T^{\frac{1}{a+b}},\big|{n_1}^{\frac{1}{a+b}}-{n_2}^{\frac{1}{a+b}}\big|^{-1}\Big).
\end{aligned}
\end{equation*}
By Lemma \ref{Sab T est}, we have
\begin{equation}\label{Sigma4 est}
  \Sigma_4\ll S_{a,b}(T)\ll T^{1+\frac{1}{a+b}-\frac{a}{b(a+b)(a+b-1)}}\mathscr{L}^7.
\end{equation}
For $\Sigma_5$, it follows from (\ref{g ab(n) par sum}) that
\begin{equation}\label{Sigma5 est}
  \Sigma_5\ll\Bigg(\sum_{n\leqslant H^bT^a}g_{a,b}(n)n^{-\frac{1}{2(a+b)}}\Bigg)^2\ll\mathscr{L}^4.
\end{equation}
From (\ref{S4 int est})--(\ref{Sigma5 est}), we deduce that
\begin{equation}\label{R12,2* int est}
  \int_{T}^{2T}|R_{12,2}^{\,*}(a,b;x)|^2\mathrm{d}x\ll T^{1+\frac{1}{a+b}}z^{-\frac{a}{b(a+b)}}+T^{1+\frac{1}{a+b}-\frac{a}{b(a+b)(a+b-1)}}\mathscr{L}^7.
\end{equation}
Similarly, we obtain
\begin{equation}\label{R21,2* int est}
  \int_{T}^{2T}|R_{21,2}^{\,*}(b,a;x)|^2\mathrm{d}x\ll T^{1+\frac{1}{a+b}}z^{-\frac{a}{b(a+b)}}+T^{1+\frac{1}{a+b}-\frac{a}{b(a+b)(a+b-1)}}\mathscr{L}^7.
\end{equation}
\subsection{\texorpdfstring{Mean squares of $G_{12}(x;H)$ and $G_{21}(x;H)$}{}}
\indent
\newline
In this subsection, we shall study the mean squares of $G_{12}(x;H)$ and $G_{21}(x;H)$. Recall that
\begin{equation*}
  G_{12}(x;H)=O\Bigg(\sum_{\substack{{n_1}^{a+b}\leqslant M_1^aM_2^bx\\n_1\equiv \ell_1(\bmod M_1)}}\min\Bigg(1,\frac{1}{H\big\|\big(\big(\frac{M_1}{n_1}\big)^ax\big)^{\frac{1}{b}}-\frac{\ell_2}{M_2}\big\|}\Bigg)\Bigg).
\end{equation*}
Therefore, one has
\begin{align*}
   &\,\,   \int\limits_{T}^{2T}G_{12}(x;H)\mathrm{d}x
  \ll  \sum_{\substack{{n_1}\leqslant(2M_1^{a}M_2^{b}T)^{\frac{1}{a+b}}\\n_1\equiv \ell_1(\bmod M_1)}}
             \int\limits_{T}^{2T}\min\Bigg(1,\frac{1}{H\big\|\big(\big(\frac{M_1}{n_1}\big)^ax\big)^{\frac{1}{b}}
             -\frac{\ell_2}{M_2}\big\|}\Bigg)\mathrm{d}x
                   \nonumber \\
  \ll & \,\, \sum_{\substack{{n_1}\leqslant(2M_1^{a}M_2^{b}T)^{\frac{1}{a+b}}\\n_1\equiv \ell_1(\bmod M_1)}}
             \int\limits_{\left(\left(\frac{M_1}{n_1}\right)^aT\right)^{\frac{1}{b}}
             -\frac{\ell_2}{M_2}}^{\left(2\left(\frac{M_1}{n_1}\right)^aT\right)^{\frac{1}{b}}-\frac{\ell_2}{M_2}}
             \min\left(1,\frac{1}{H\|u\|}\right)\mathrm{d}
             \Bigg(\frac{\big(u+\frac{\ell_2}{M_2}\big)^b}{\big(\frac{M_1}{n_1}\big)^a}\Bigg)
                   \nonumber \\
  \ll & \,\, \sum_{\substack{{n_1}\leqslant(2M_1^{a}M_2^{b}T)^{\frac{1}{a+b}}\\n_1\equiv \ell_1(\bmod M_1)}}
             \int\limits_{\left(\left(\frac{M_1}{n_1}\right)^aT\right)^{\frac{1}{b}}
             -\frac{\ell_2}{M_2}}^{\left(2\left(\frac{M_1}{n_1}\right)^aT\right)^{\frac{1}{b}}-\frac{\ell_2}{M_2}}
             \frac{\big(u+\frac{\ell_2}{M_2}\big)^{b-1}}{\big(\frac{M_1}{n_1}\big)^a}\min\left(1,\frac{1}{H\|u\|}\right)\mathrm{d}u
                   \nonumber \\
  \ll & \,\, \sum_{{n_1}\leqslant(2M_1^{a}M_2^{b}T)^{\frac{1}{a+b}}}
             \bigg(\left(\frac{M_1}{n_1}\right)^aT\bigg)^{\frac{b-1}{b}}\left(\frac{M_1}{n_1}\right)^{-a}
             \int\limits_{\left(\left(\frac{M_1}{n_1}\right)^aT\right)^{\frac{1}{b}}
             -\frac{\ell_2}{M_2}}^{\left(2\left(\frac{M_1}{n_1}\right)^aT\right)^{\frac{1}{b}}-\frac{\ell_2}{M_2}}
             \min\left(1,\frac{1}{H\|u\|}\right)\mathrm{d}u
                  \nonumber \\
  \ll & \,\, T^{\frac{b-1}{b}}\sum_{{n_1}\leqslant(2M_1^{a}M_2^{b}T)^{\frac{1}{a+b}}}
             \left(\frac{M_1}{n_1}\right)^{-\frac{a}{b}}\left(\left(\frac{M_1}{n_1}\right)^aT\right)^{\frac{1}{b}}
             \int_{0}^{1}\min\left(1,\frac{1}{H\|u\|}\right)\mathrm{d}u
                  \nonumber \\
  \ll & \,\, T\sum_{{n_1}\leqslant(2M_1^{a}M_2^{b}T)^{\frac{1}{a+b}}}\int_{0}^{1}
             \min\left(1,\frac{1}{H\|u\|}\right)\mathrm{d}u
             \ll T^{1+\frac{1}{a+b}}\int_{0}^{\frac{1}{2}}\min\left(1,\frac{1}{H\|u\|}\right)\mathrm{d}u
                  \nonumber \\
  \ll & \,\, T^{1+\frac{1}{a+b}}\left(\int_{0}^{\frac{1}{H}}du
             +\int_{\frac{1}{H}}^{\frac{1}{2}}\frac{1}{Hu}\mathrm{d}u\right)
             \ll T^{1+\frac{1}{a+b}}H^{-1}\mathscr{L},
\end{align*}
which combined with the trivial estimate $G_{12}(x;H)\ll T^{\frac{1}{a+b}}$ yields
\begin{equation}\label{G12 int est}
  \int_{T}^{2T}G_{12}^2(x;H)\mathrm{d}x\ll T^{1+\frac{2}{a+b}}H^{-1}\mathscr{L}.
\end{equation}
Similarly, we have
\begin{equation}\label{G21 int est}
  \int_{T}^{2T}G_{21}^2(x;H)\mathrm{d}x\ll T^{1+\frac{2}{a+b}}H^{-1}\mathscr{L}.
\end{equation}
\subsection{Completion of the proof of the Theorem \ref{Theorem}}
\indent
\newline
In this section, we complete the proof of Theorem \ref{Theorem}. We take $H=T^{10(a+b)}$ and $z=T^{a/b}\mathscr{L}^{-b-b^2/a-1}$. From (\ref{Delta ab chaifenhou}), (\ref{R12,2* int est})--(\ref{G21 int est}) we get
\begin{equation}\label{Eab int est}
  \int_{T}^{2T}E_{a,b}^2(x)\mathrm{d}x\ll T^{1+\frac{1}{a+b}-\frac{a}{b(a+b)(a+b-1)}}\mathscr{L}^7,
\end{equation}
which combined with (\ref{Delta* ab int eva}) and Cauchy's inequality yields
\begin{equation}\label{Delta* ab Eab int est}
  \int_{T}^{2T}E_{a,b}(x)\Delta^{*}_{a,b}(x;z)\mathrm{d}x\ll T^{1+\frac{1}{a+b}-\frac{a}{2b(a+b)(a+b-1)}}\mathscr{L}^{\frac{7}{2}}.
\end{equation}
From (\ref{Delta* ab int eva}), (\ref{Eab int est}) and (\ref{Delta* ab Eab int est}), we get
\begin{align}\label{Delta ab int eva}
  \int_{T}^{2T}\Delta_{a,b}^2( M_1^aM_2^bx; \ell_1,M_1,\ell_2,M_2)\mathrm{d}x=&\,\,\frac{c_1^2(a,b)}{2\pi^2}\sum_{n=1}^{\infty}{g^{*}_{a,b}}(n)\int_{T}^{2T}x^{\frac{1}{a+b}}\mathrm{d}x\notag\\
  &\,\,+O(T^{1+\frac{1}{a+b}-\frac{a}{2b(a+b)(a+b-1)}}\mathscr{L}^{\frac{7}{2}}).
\end{align}
From (\ref{Delta ab int eva}) and a splitting argument, we get
\begin{align*}
    & \,\, \int_{1}^{T}\Delta_{a,b}^2( M_1^aM_2^bx; \ell_1,M_1,\ell_2,M_2)\mathrm{d}x
                  \nonumber \\
  = & \,\, \frac{c_1^2(a,b)}{2\pi^2}\sum_{n=1}^{\infty}{g^{*}_{a,b}}(n)
           \int_{T}^{2T}x^{\frac{1}{a+b}}\mathrm{d}x+O\big(T^{1+\frac{1}{a+b}-\frac{a}{2b(a+b)(a+b-1)}}
           \mathscr{L}^{\frac{7}{2}}\big)
                  \nonumber \\
  = &\,\, \frac{c_1^2(a,b)(a+b)}{2(a+b+1)\pi^2}\sum_{n=1}^{\infty}{g^{*}_{a,b}}(n)
          T^{\frac{1+a+b}{a+b}}O\big(T^{1+\frac{1}{a+b}-\frac{a}{2b(a+b)(a+b-1)}}\mathscr{L}^{\frac{7}{2}}\big)
                  \nonumber \\
  = &\,\, \mathfrak{c}^{*}_{a,b}T^{\frac{1+a+b}{a+b}}
          +O\big(T^{\frac{1+a+b}{a+b}-\frac{a}{2b(a+b)(a+b-1)}}\mathscr{L}^{\frac{7}{2}}\big),
\end{align*}
which completes the proof of the Theorem.

\section*{Acknowledgement}

%\noindent
The authors would like to appreciate the referee for his/her patience in refereeing this paper.
This work is supported by Beijing Natural Science Foundation (Grant No. 1242003), and
the National Natural Science Foundation of China (Grant Nos. 12471009, 12301006, 11901566, 12001047).

\end{document}